\documentclass[10pt,a4paper]{article}
\usepackage[utf8]{inputenc}
\usepackage{amsmath,amsthm}
\usepackage{amsfonts,graphicx}
\usepackage{amssymb}
\usepackage{hyperref}

\newtheorem{lemma}{Lemma}
\newtheorem{theorem}{Theorem}
\newtheorem{proposition}{Proposition}
\newtheorem{definition}{Def}

\newtheorem{remark}{Remark}
\newtheorem{example}{Example}
\newtheorem{corollary}{Corollary}
\newcommand{\B}{{\mathcal{B}}}
\newcommand{\NB}{{\mathcal{NB}}}
\renewcommand{\O}{{\mathcal{O}}}
\newcommand{\PO}{{\mathcal{PO}}}
\newcommand{\I}{{\mathcal{I}}}
\newcommand{\Ra}{\mathcal{R}}
\newcommand{\N}{\mathcal{N}} 
\newcommand{\NN}{\mathbb{N}}
\newcommand{\ol}[1]{\overline{#1}}
\newcommand{\LRA}{\Longleftrightarrow}
\newcommand{\xd}{x^\dagger}
\newcommand{\Ha}{H}   

\newcommand{\Fprim}[1]{F'[{#1}]}
\newcommand{\Gprim}[1]{G'[{#1}]}
\newcommand{\Rop}[1]{R_{#1}}
\newcommand{\myx}{x^\dagger}
\newcommand{\sig}{\gamma}  

\newcommand{\Lin}{{{\rm Lin}}}
\newcommand{\ULin}{{{\rm UniLin}}}
\title{Operator ordering by ill-posedness in Hilbert and Banach spaces}
\author{Stefan Kindermann\footnotemark[1]%
\and
Bernd Hofmann\footnotemark[2]%
}

\begin{document}

\maketitle\footnotetext[1]{Industrial Mathematics Institute, Johannes Kepler University Linz, Alternbergergstraße 69, 4040 Linz, Austria. Email: kindermann@indmath.uni-linz.ac.at}
\footnotetext[2]{Chemnitz University of Technology, Faculty of Mathematics, 09107 Chemnitz, Germany.\\ Email: hofmannb@mathematik.tu-chemnitz.de}

\begin{abstract}
For operators representing ill-posed problems, an ordering 
by ill-posed\-ness is proposed, where one operator 
is considered more ill-posed than another one if the former can 
be expressed as a cocatenation of bounded operators involving the latter. 
This definition is motivated by a recent one introduced by Math\'e and Hofmann [Adv. Oper. Theory, 2025] that utilizes bounded and orthogonal operators, 
and we show the equivalence of our new definition with this one for the case of compact and 
non-compact linear operators in Hilbert spaces.  
We compare our ordering with other measures of ill-posedness such as the decay of the singular values, 
norm estimates, and 
range inclusions. Furthermore, as the new definition 
does not depend on the notion of orthogonal operators, it
can be extended to the case of linear operators 
in Banach spaces, and it also provides ideas for applications to nonlinear problems in Hilbert spaces. In the latter context, 
certain nonlinearity conditions can be interpreted as ordering relations between a nonlinear operator 
and its linearization. 
\end{abstract}

\bigskip

{\parindent0em {\bf MSC2020:}}
47A52, 65J20, 47J06, 47B01, 47B02

\bigskip

{\parindent0em {\bf Keywords:}}
ill-posed problem, measures of ill-posedness, singular values, degree of ill-posedness, range inclusion, nonlinearity condition
\bigskip

\section{Introduction} \label{sec:intro}
When studying ill-posed and inverse problems, 
quite frequently one encounters the situation that one problem 
might be ``more difficult" to solve than another, 
and with respect to regularization theory, this  problem is often called ``more ill-posed" than the other. 
Usually, in the case of linear compact operators in Hilbert spaces, the decay rate of the singular values is used as indicator for such discrimination. 

In this article we define an abstract general 
concept of when one problem is more ill-posed than 
another by using an ordering relation based 
on connecting operators. This is motivated 
by a recent definition introduced 
by Math\'e and Hofmann
in \cite{MaHo}, where bounded
and orthogonal operators are used. 

The main point of this article is to define a 
more general ordering than that in 
\cite{MaHo}, which  only employs bounded operators
and thus allows extensions  
to the Banach space case and even the nonlinear case. 
One of the main theoretical results is that
the ordering defined here is 
equivalent to that in \cite{MaHo}, namely both in 
the compact and non-compact case, and in the compact case it is also 
equivalent to the more classical approach of comparing 
the decay rates of the singular values. 

After defining the ordering in Section~\ref{sec:illposedness},  we prove the 
mentioned equivalence in Section~\ref{sec:compact}
in the compact case and in Section~\ref{sec:noncompact}
for the non-compact case. Section~\ref{sec:alternative} is devoted to alternative orderings and the relation to regularization.
Following this, Section~\ref{sec:Douglas} recalls Douglas range inclusion theorem in light of our suggested ordering approach. 
A generalization to the Banach space case 
is done in Section~\ref{sec:Banach}. Furthermore, in Section~\ref{sec:nonlinear}
we extend the ordering ideas to operator equations, 
interpreting certain nonlinearity conditions as  ordering  relation between 
an operator and its linearization.

\section{Orderings by ill-posedness} \label{sec:illposedness}
Let $A$ and $A'$ be bounded linear operators $A: X \to Y$ and $A': X^\prime \to Y^\prime$.
We use the following definitions: 
$\N(A)$ denotes the nullspace of $A$ and 
$\Ra(A)$ its range. 
$\B(X,Y)$ denotes the set of {\em bounded} operators between Banach spaces $X$ and $Y$. If the spaces 
are clear from the context, then we simply write $\B$. 
 $\O(X,Y)$ denotes the set of {\em isometric} operators between {\em Hilbert} spaces $X$ and $Y$. If the spaces 
are clear from the context, then we simply write $\O$. 
Moreover we denote by $\PO(X,Y)$ the set of {\em partial isometries}.

Note that an isometric operator $U$
satisfies $U^TU = I$. If it is additionally surjective $\Ra(U) =Y$, then we speak of an {\em orthogonal} (in real spaces) 
or {\em unitary} (in complex spaces)
operator.  An operator $U$ that has a nullspace but 
is an isometry when restricted to  $\N(U)^\bot$ is a {\em partial isometry}.

An ordering of inverse problems by ill-posedness 
was proposed in \cite{HoKi10},
where a measure of ill-posedness was abstractly defined as an ordering of operators. 
In this article, motivated by  \cite{MaHo},
the following ordering is the main tool that we work with: 
\begin{definition}\label{def:main}
Let $X,Y,X',Y'$ be Banach spaces. 
Let $A: X\to Y$, $A': X'\to Y'$ be bounded linear operators. 
The operator $A'$ is said to be \emph{more ill-posed} than the operator $A$ if there applies an ordering 
defined as 
\begin{align}\label{eq:threefactors}
\begin{split} &A' \leq_{\B,\B} A\\
\Longleftrightarrow \qquad &\exists \,T \in \B(\ol{\Ra(A)},Y')\;\;\mbox{and}\;\;\exists\, 
S \in \B(X,X') \quad   \text{such that} \quad   A' = T A S. 
\end{split}
\end{align}
If 
\[ A' \leq_{\B,\B} A \qquad \text{ and }  \qquad  A \leq_{\B,\B} A',  \]
then both operators are \emph{equivalent with respect to ill-posedness}, and we write 
\[ A '\sim_{\B,\B} A. \] 
If either $A^\prime$ is more ill-posed than $A$ or $A$ is more ill-posed than $A^\prime$, then $A$ and $A^\prime$ are said to be \emph{comparable}, otherwise \emph{non-comparable}. If $A^\prime$ is more ill-posed than $A$, but $A$ fails to be more ill-posed than $A^\prime$, then $A^\prime$ is said to be \emph{strictly more ill-posed} than $A$.

The operators $T,S$ in \eqref{eq:threefactors}
are referred to as \emph{connecting operators} 
in the ordering. 
\end{definition} 
As we will see, in many cases this  definition reflects  most of the established notions of 
$A'$ is ``more ill-posed'' than $A$.

\section{Compact operators in Hilbert spaces} \label{sec:compact}
\subsection{General assertions} \label{subsec:generalcom}
Let us specialize the definition to the simplest situation when all spaces are Hilbertian and 
the operators $A,A'$ are compact. 
In the Sections~\ref{sec:compact} to \ref{sec:Douglas}
we assume throughout that 
\begin{equation}\label{Hilbert} X,Y,X',Y'  \text{ are  Hilbert spaces.} 
\end{equation}

\begin{proposition}
 A simple consequence of Definition~\ref{def:main} is the adjoint-invariance of the ordering: 
 \[ A' \leq_{\B,\B} A \Longleftrightarrow {A'}^* \leq_{\B,\B} A^* .  \]
\end{proposition}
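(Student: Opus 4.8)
The plan is to establish the forward implication ``$\Rightarrow$'' by taking Hilbert-space adjoints of a factorization of $A'$, and then to obtain ``$\Leftarrow$'' for free from the identity $A^{**}=A$, which holds since all the spaces here are Hilbert spaces (cf.~\eqref{Hilbert}).

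Assume $A'\leq_{\B,\B}A$ and choose connecting operators as in Definition~\ref{def:main}, so that $A'=TAS$ with $T\in\B(\ol{\Ra(A)},Y')$ (the product $TAS$ being meaningful because $AS$ has range in $\Ra(A)\subseteq\ol{\Ra(A)}$). Taking adjoints gives ${A'}^*=S^*A^*T^*$, and it remains only to recast this as an admissible factorization of ${A'}^*$ through $A^*$, that is, in the form ${A'}^*=T'A^*S'$ with $T'\in\B(\ol{\Ra(A^*)},X')$ and $S'\in\B(Y',Y)$. For the right factor, observe that $T^*$ maps $Y'$ into $\ol{\Ra(A)}\subseteq Y$; composing with the inclusion $\ol{\Ra(A)}\hookrightarrow Y$ yields $S'\in\B(Y',Y)$ with $A^*S'=A^*T^*$. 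For the left factor, note that $A^*S'$ has range contained in $\Ra(A^*)\subseteq\ol{\Ra(A^*)}$, so it suffices to take $T'$ to be the restriction of the bounded operator $S^*\colon X\to X'$ to the closed subspace $\ol{\Ra(A^*)}$; then $T'\in\B(\ol{\Ra(A^*)},X')$ and $T'A^*S'=S^*A^*T^*={A'}^*$. Hence ${A'}^*\leq_{\B,\B}A^*$.

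For the converse, apply the implication just proved to the pair $(A^*,{A'}^*)$ in place of $(A,A')$; this gives ${A'}^*\leq_{\B,\B}A^*\Rightarrow{A'}^{**}\leq_{\B,\B}A^{**}$, and then $A^{**}=A$, ${A'}^{**}=A'$ give $A'\leq_{\B,\B}A$, completing the equivalence.

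The only real point to watch is the asymmetry built into Definition~\ref{def:main}: the outer connecting operator $T$ need only be defined on $\ol{\Ra(A)}$, not on all of $Y$. Under adjunction this turns into the requirement that, in the factorization of ${A'}^*$, the inner connecting operator be defined only on $\ol{\Ra(A^*)}$, and one has to see that restricting the globally defined operator $S^*$ to that subspace costs nothing, while $T^*$ automatically takes values in $\ol{\Ra(A)}$ -- precisely the subspace into which $S'$ is then allowed to land after composing with the inclusion. Beyond this bit of bookkeeping the argument is immediate and, in particular, uses nothing about compactness.
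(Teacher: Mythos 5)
Your proof is correct, and it is the argument the paper evidently intends: the proposition is stated there without proof as a ``simple consequence,'' and the natural route is exactly yours -- dualize $A'=TAS$ to ${A'}^*=S^*A^*T^*$ and handle the bookkeeping caused by $T$ living only on $\ol{\Ra(A)}$, with the converse following from $A^{**}=A$. Your care with the inclusion $\ol{\Ra(A)}\hookrightarrow Y$ and the restriction of $S^*$ to $\ol{\Ra(A^*)}$ is precisely the only nontrivial point.
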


Under the stated assumption, we may use seemingly (stronger) alternative definitions of ordering that uses isometric operators instead of bounded ones, as it 
was used in \cite{MaHo}. 
\begin{definition}\label{def1}
Define the following orderings: 
\begin{alignat*}{3} A' &\leq_{\O,\B} A & &\Longleftrightarrow \ \exists\, O \in \O(\ol{\Ra(A)},Y'),\; S \in S \in \B(X,X')& \; &\text{ with }\quad   A' = O AS\,, \\
A' &\leq_{\B,\O} A & \ &\Longleftrightarrow \ \exists\, O \in \B(X,X'),\; T \in \O(X,\N(A)^\bot)& \; &\text{ with }\quad   A' = T A O \,.
\end{alignat*}
If the operators in $\O$ can be chosen as identity, 
then we write $\leq_{\I,\B}$ or $\leq_{\B,\I}$. 

Moreover, in case that $A,A'$ are both compact, we may define an ordering by the singular values
 \[ A' \leq _{\sigma} A\; \Longleftrightarrow  \;\forall \, n \in \mathbb{N}:\; \sigma_n(A') \leq  \sigma_n(A) \]
 and similarly,
  \[ A' \lessapprox _{\sigma,C} A\; \Longleftrightarrow \;  \exists \, C >0 \;\;\text{such that}\;\;\forall \, n \in \mathbb{N}:\; \sigma_n(A') \leq  C \sigma_n(A). \]
\end{definition}

Clearly, $A' \leq_{\O,\B} A$ or  $A' \leq_{\B,\O} A$ imply $A' \leq_{\B,\B} A$. 
But we will show below that all three orderings are in fact equivalent. 
For this we need the results of the following lemma, and we refer in this context also to \cite[Prop.~3]{MaHo}. 
\begin{lemma}\label{lem:MaHo}
Let $A:X\to Y$, $A':X' \to Y'$ be compact operators acting between Hilbert spaces. 

\begin{itemize} 
 \item
In case that  $\dim(\Ra(A')) =\dim(\Ra(A))$ (finite or not), 
we have 
\[ A'\lessapprox_{\sigma,C} A \Rightarrow A' \leq_{\O,\B} A ,\] 
with $O:\ol{\Ra(A)} \to \ol{\Ra(A')}$ unitary. Also, we have 
\[ A'\lessapprox_{\sigma,C} A \Rightarrow A' \leq_{\B,\O} A, \]
with $O:\N(A')^\bot \to \N(A)^\bot$ unitary. 
\item In case that  $\Ra(A')$ is finite-dimensional and  $\dim(\Ra(A')) <\dim(\Ra(A))$,
we have 
\[ A'\lessapprox_{\sigma,C} A \Rightarrow A' = O A S, \] 
where $S$ is bounded and $\O: \ol{\Ra(A)} \to  \ol{\Ra(A')}$ is a partial 
isometry. 
\end{itemize}
\end{lemma}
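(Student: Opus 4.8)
The plan is to prove all three items of Lemma~\ref{lem:MaHo} from a single spectral construction. Start from the singular value decompositions of the compact operators $A$ and $A'$. Write $A x = \sum_n \sigma_n(A)\,\langle x, u_n\rangle\, v_n$ with $\{u_n\}$ an orthonormal basis of $\N(A)^\bot$ and $\{v_n\}$ an orthonormal basis of $\ol{\Ra(A)}$, and similarly $A' x = \sum_n \sigma_n(A')\,\langle x, u_n'\rangle\, v_n'$ with $\{u_n'\}$ orthonormal in $\N(A')^\bot$ and $\{v_n'\}$ orthonormal in $\ol{\Ra(A')}$. The hypothesis $\dim\Ra(A') = \dim\Ra(A)$ guarantees that the index sets for the two expansions have the same cardinality, so the bases can be put in bijection index-by-index; this is the structural fact that makes the equal-dimension case clean.

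For the first item, I would define $S\in\B(X,X')$ to act as $u_n \mapsto \frac{\sigma_n(A')}{\sigma_n(A)}\,u_n'$ on $\N(A)^\bot$ and as $0$ on $\N(A)$. This $S$ is bounded precisely because $\sigma_n(A')/\sigma_n(A) \le C$ by the assumption $A'\lessapprox_{\sigma,C} A$; note one must be slightly careful when $\sigma_n(A)=0$ for some $n$, but then $\sigma_n(A')=0$ too and the corresponding coordinate can simply be sent to $0$. Then I would set $O:\ol{\Ra(A)}\to\ol{\Ra(A')}$ to be the unitary extending $v_n \mapsto v_n'$; since the two ranges have equal Hilbert-space dimension this is a genuine unitary onto $\ol{\Ra(A')}\subseteq Y'$. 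A direct computation gives $O A S\, x = \sum_n \sigma_n(A)\cdot \frac{\sigma_n(A')}{\sigma_n(A)}\,\langle x,u_n\rangle\, v_n' = A' x$, so $A' = OAS$ and $A' \leq_{\O,\B} A$. The second statement of the first item is the adjoint of this: apply the already-proved statement to $A'^*$ and $A^*$ (which have the same singular values, so the hypotheses transfer), obtaining $A'^* = O' A^* S'$ with $O'$ unitary between the relevant closures of ranges, and then take adjoints, using that $\ol{\Ra(A^*)} = \N(A)^\bot$ and that the adjoint of a unitary is unitary; the Proposition on adjoint-invariance stated just above makes this transfer routine. Alternatively one constructs it directly by putting the ``unitary part'' on the domain side: $O:\N(A')^\bot\to\N(A)^\bot$, $u_n'\mapsto u_n$, and absorbing the ratios $\sigma_n(A')/\sigma_n(A)$ into the bounded operator acting on the target side.

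For the third item, the index set of $A'$ is finite of size $m = \dim\Ra(A')$ while that of $A$ is larger (possibly infinite). Now I would define $S$ on $\N(A)^\bot$ by $u_n \mapsto \frac{\sigma_n(A')}{\sigma_n(A)}\,u_n'$ for $n \le m$ and $u_n \mapsto 0$ for $n > m$, extended by $0$ on $\N(A)$; boundedness again follows from $\sigma_n(A')/\sigma_n(A)\le C$ over the finitely many relevant indices. The operator $O:\ol{\Ra(A)}\to\ol{\Ra(A')}$ is defined by $v_n\mapsto v_n'$ for $n\le m$ and $v_n\mapsto 0$ for $n>m$; this is exactly a partial isometry, isometric on $\overline{\mathrm{span}}\{v_n : n\le m\}$ and zero on its orthogonal complement within $\ol{\Ra(A)}$. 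The same telescoping computation $OAS\,x = \sum_{n\le m}\sigma_n(A)\cdot\frac{\sigma_n(A')}{\sigma_n(A)}\langle x,u_n\rangle v_n' = A'x$ finishes it.

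I expect the main obstacle to be bookkeeping rather than a deep difficulty: one must handle carefully the cases where some $\sigma_n(A) = 0$ (equivalently where the index set is ``really'' finite and one is looking beyond it), ensure the constructed $S$ is defined and bounded on all of $X$ and not just on $\N(A)^\bot$, and verify that the target-side operator indeed maps into $Y'$ with the correct range $\ol{\Ra(A')}$ or a subspace of it. The one genuinely substantive point is the identification of cardinalities of the two orthonormal systems under the dimension hypothesis, which is what licenses the index-by-index bijection; once that is in place, the rest is the explicit diagonal construction and a one-line verification. It is also worth remarking that the converse direction (that $A'\leq_{\O,\B}A$ implies $A'\lessapprox_{\sigma,C}A$) is not claimed here and would instead follow from Weyl-type monotonicity of singular values under composition with bounded operators; only the stated implications need be proved.
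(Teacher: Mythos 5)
Your overall strategy is exactly the one the paper uses: write out both singular value decompositions, match the two orthonormal systems index by index (possible in the equal-dimension case, and degenerating to a partial isometry when $\dim\Ra(A')<\dim\Ra(A)$), put the ratios $\sigma_n(A')/\sigma_n(A)$ into a diagonal bounded operator on the domain side and a basis-exchange isometry on the range side, and obtain the $\leq_{\B,\O}$ variant by passing to adjoints. The architecture therefore matches the paper's proof, including the cardinality bookkeeping and the observation that boundedness of the diagonal factor is precisely the hypothesis $\sigma_n(A')\le C\sigma_n(A)$.

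There is, however, one concrete error: your $S$ points in the wrong direction. In the factorization $A'=OAS$ the composition is read right to left, so $S$ must map the domain $X'$ of $A'$ into the domain $X$ of $A$; you instead define $S$ on $\N(A)^\bot\subset X$ by $u_n\mapsto \frac{\sigma_n(A')}{\sigma_n(A)}\,u_n'$, which lands in $X'$, so $A\circ S$ is not even defined when $X\neq X'$, and your displayed verification tacitly replaces $\langle x,u_n'\rangle$ by $\langle x,u_n\rangle$, which is what makes the final expression look like $A'x$ when it is not (recall $A'x=\sum_n\sigma_n(A')\langle x,u_n'\rangle v_n'$). The correct diagonal operator is $Su_n'=\frac{\sigma_n(A')}{\sigma_n(A)}\,u_n$, extended by $0$ on $\N(A')$, exactly as in the paper's formula $Sf=\sum_{i}\frac{\sigma_i'}{\sigma_i}(f,\phi_i')\phi_i$; this gives $ASu_n'=\sigma_n(A')v_n$ and then $OASu_n'=\sigma_n(A')v_n'=A'u_n'$. (The statement of Definition~\ref{def:main} writes $S\in\B(X,X')$, which may be the source of the confusion, but the mapping direction actually used is $X'\to X$.) With that single reversal, your argument for all three items goes through and coincides with the paper's.
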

\begin{proof}
Recall the singular value decomposition of compact operators:
\[ A' = \sum_{i \in I'} \sigma_i' (.,\phi_i') \psi_i' \qquad 
 A = \sum_{i \in I} \sigma_i (.,\phi_i) \psi_i. \]
Here, $\sigma_i,\sigma_i'>0$ are the positive singular values of $A$ and $A'$, respectively.  
The functions $\psi_i,\psi_i'$ are  orthogonal bases of $\ol{R(A)}, \ol{R(A')}$, respectively, 
and $\phi_i,\phi_i'$ are  orthogonal bases of $\N(A)^\bot, \N(A')^\bot$, respectively. 
The index sets $I,I'$ are either countably infinite or finite depending on 
the dimension of the ranges, or equivalently, on the number of nonzero singular values. 
In any case by $A'\lessapprox_{\sigma,C} A$ we can assume
that  $I' \subset I$. 

Define the following operators:
\begin{equation} \label{eq:Smap} S f:= \sum_{i \in I'} \frac{\sigma_i'}{\sigma_i} (f,\phi_i') \phi_i  \end{equation}
\[ O f:= \sum_{i \in I'} (f,\psi_i) \psi_i'.  \]
Then, inserting the definitions, yields (the indices $i,j,k$ correspond to $O$,$A$,$S$, respectively)   
\begin{align*}
O A S f &= 
\sum_{i \in I'} 
\sum_{j \in I} 
\sum_{k\in I'} \sigma_{j} \frac{\sigma_k'}{\sigma_k} (f,\phi_k') (\phi_k, \phi_j)  (\psi_j,\psi_i)  \psi_i'\, .
\end{align*}
By orthogonality 
$(\phi_k, \phi_j) = \delta_{k,j}$ and $(\psi_j,\psi_i) = \delta_{i,j}$, and thus two summations drop out: 
\begin{align*}
O A S f &= 
\sum_{i \in I'} 
 \sigma_{i} \frac{\sigma_i'}{\sigma_i} (f,\phi_i')   \psi_i' = 
\sum_{i \in I'}  {\sigma_i'} (f,\phi_i')   \psi_i' = A' f.
\end{align*}
By $A'\lessapprox_{\sigma,C} A$ it is easy to show that $S$ is bounded. 
In case that  $\dim(\Ra(A')) =\dim(\Ra(A))$, we have that $I' = I$, 
and then it follows easily that 
$O$ is an isometry $\ol{R(A)} \to \ol{R(A')} \subset Y'$ and surjective, hence unitary. 
Since $A'\lessapprox_{\sigma,C} A$ clearly implies ${A'}^*\lessapprox_{\sigma,C} A^*$, 
applying the previous results gives ${A'}^* = O A^* S$, hence 
$A' = S^* A O^*$, where $O^*: \ol{R(A')}= \N(A')^\bot \to  \ol{R(A)}  = \N(A)^\bot$ is 
again unitary.

In the other case $\dim(\Ra(A')) <\dim(\Ra(A))$,  we have $I' < I$, 
and $O$ is only a partial isometry since the elements $\psi_i$, $i\in I\setminus I'$ are 
in its nullspace. 
\end{proof}

\begin{remark} \rm
The case that  $\dim(\Ra(A')) <\dim(\Ra(A))$ cannot appear if $A$ and $A'$ are both ill-posed operators in the sense of 
Nashed \cite{Nashed}. Since we are mostly interested in this situation, we do not consider it in detail.
\end{remark}

A simple corollary of Lemma~\ref{lem:MaHo} is the following, 
which is obtained  since $S$ in~\eqref{eq:Smap} is easily seen to be 
invertible under the following conditions: 
\begin{corollary}\label{cor:iso}
Assume that $A,A'$ are compact and injective operators between 
Hilbert spaces with equivalent decay rates of the singular values, i.e., 
$A \approx_{\sigma} A'$, or in more detail,  
\begin{equation}
\exists \,c,C: \quad 
c \,\sigma_n(A') \leq \sigma_n(A) \leq C\, \sigma_n(A') \quad \forall n \in \mathbb{N}. 
\end{equation}
Then there exists an isomorphism $S$ (i.e., bounded invertible linear map)  and a unitary operator $U$
such that 
\[ A'  = S A U \quad \text{ or} \quad  A' = U A S. \]
\end{corollary}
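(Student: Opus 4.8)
The plan is to read both factorizations off Lemma~\ref{lem:MaHo} and then upgrade the connecting operator $S$ to an isomorphism using injectivity. First I would note that the two-sided estimate $c\,\sigma_n(A')\le\sigma_n(A)\le C\,\sigma_n(A')$ in particular gives $A'\lessapprox_{\sigma,C}A$, and that it forces $\sigma_n(A)>0\Longleftrightarrow\sigma_n(A')>0$; hence the index sets in the singular value decompositions coincide, $I=I'$, equivalently $\dim(\Ra(A))=\dim(\Ra(A'))$ (finite or not). So we are in the first case of Lemma~\ref{lem:MaHo}, which yields $A'=U A S$ with $U\colon\ol{\Ra(A)}\to\ol{\Ra(A')}$ unitary and $S$ the operator from~\eqref{eq:Smap}. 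Applying the same lemma to the adjoints $A^*,{A'}^*$ (whose singular values are unchanged, so the hypothesis on singular values still holds) and taking adjoints exactly as in the proof of the lemma produces in addition a representation $A'=S A U$ in which $U$ is unitary on $\N(A')^\bot\to\N(A)^\bot$ and $S$ is again of the diagonal form~\eqref{eq:Smap} for the adjoint.

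Next I would verify that the operator $S$ from~\eqref{eq:Smap} is boundedly invertible. Injectivity of $A$ and $A'$ means $\N(A)^\bot=X$ and $\N(A')^\bot=X'$, so $\{\phi_i\}_{i\in I}$ and $\{\phi_i'\}_{i\in I}$ are \emph{complete} orthonormal bases of $X$ and $X'$, respectively. With respect to these bases $S$ acts diagonally, $S\colon\phi_i'\mapsto(\sigma_i'/\sigma_i)\,\phi_i$, and the hypothesis gives the uniform bound $1/C\le\sigma_i'/\sigma_i\le 1/c$ for all $i\in I$. Hence $S$ is bounded and the diagonal operator $\phi_i\mapsto(\sigma_i/\sigma_i')\,\phi_i'$ is a bounded two-sided inverse, so $S$ is an isomorphism; this settles the representation $A'=UAS$. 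Likewise, injectivity turns the unitary $\N(A')^\bot\to\N(A)^\bot$ appearing above into a genuine unitary $X'\to X$, which gives the representation $A'=SAU$.

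The only load-bearing point is the invertibility of $S$, and it is precisely here that both hypotheses are used: injectivity of $A$ and $A'$ is what makes $\{\phi_i\}$ and $\{\phi_i'\}$ complete, so that $S$ is surjective and not merely injective, while the two-sided singular-value estimate is what keeps the diagonal entries $\sigma_i'/\sigma_i$ together with their reciprocals uniformly bounded, so that $S^{-1}$ exists and is bounded. One mild caveat I would flag concerning the word ``unitary'': in the representation $A'=UAS$ the factor $U$ is unitary from $\ol{\Ra(A)}$ onto $\ol{\Ra(A')}$ and is genuinely unitary on all of $Y'$ only when $A'$ has dense range, but since only the restriction of $U$ to $\ol{\Ra(A)}$ enters the identity this is immaterial for the factorization; symmetrically, the diagonal factor in $A'=SAU$ is invertible between $\ol{\Ra(A)}$ and $\ol{\Ra(A')}$, and an isomorphism of the full spaces as soon as $A$ and $A'$ have dense range. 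Everything else has already been carried out in the proof of Lemma~\ref{lem:MaHo}.
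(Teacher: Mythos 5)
Your proof is correct and follows exactly the route the paper intends: the paper derives the corollary directly from Lemma~\ref{lem:MaHo}, remarking only that the operator $S$ from \eqref{eq:Smap} is ``easily seen to be invertible'' under the stated hypotheses, and your argument supplies precisely those details (completeness of $\{\phi_i\},\{\phi_i'\}$ from injectivity, plus the uniform two-sided bound $1/C\le\sigma_i'/\sigma_i\le 1/c$ giving a bounded diagonal inverse). Your caveat about the exact domain and codomain of the unitary factor is consistent with how the lemma itself is stated and does not affect the conclusion.
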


Now we are ready to state the first main result of the equivalence of 
 Definition~\ref{def:main} and Definition~\ref{def1}:
\begin{theorem}\label{th0}
Let $A,A^\prime$ be compact operators between Hilbert spaces such that 
both of them are ill-posed, which means that $\dim(\Ra(A)) = \dim(\Ra(A')) = \infty$. 

Then the following statements are equivalent:  
\begin{enumerate} 
\item\label{one} \[ A' \leq_{\B,\B} A\,.\]
\item\label{two} \[ A' \leq_{\O,\B} A\,. \]
\item\label{three} \[ A' \leq_{\B,\O} A\,. \]
\item\label{four} \[ A' \lessapprox_{\sigma,C}  A\,. \]
\end{enumerate} 
\end{theorem}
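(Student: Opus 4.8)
The plan is to close the cycle $\eqref{four}\Rightarrow\eqref{two}\Rightarrow\eqref{one}\Rightarrow\eqref{four}$ and, in parallel, $\eqref{four}\Rightarrow\eqref{three}\Rightarrow\eqref{one}$; together these give the equivalence of all four statements. Two of the links come for free: since every isometry is a bounded operator of norm one, a factorization $A'=OAS$ with $O\in\O$ (witnessing $A'\leq_{\O,\B}A$) or $A'=TAO$ with $O\in\O$ (witnessing $A'\leq_{\B,\O}A$) is already a legal factorization in the sense of Definition~\ref{def:main}, so $\eqref{two}\Rightarrow\eqref{one}$ and $\eqref{three}\Rightarrow\eqref{one}$ hold trivially; this is exactly the remark made right before Lemma~\ref{lem:MaHo}.

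For $\eqref{four}\Rightarrow\eqref{two}$ and $\eqref{four}\Rightarrow\eqref{three}$ I would invoke Lemma~\ref{lem:MaHo}. The standing hypothesis that both $A$ and $A'$ are ill-posed means $\dim(\Ra(A))=\dim(\Ra(A'))=\infty$, so in particular $\dim(\Ra(A'))=\dim(\Ra(A))$ and we are precisely in the first bullet of that lemma (the partial-isometry case cannot occur --- this is the only place the ill-posedness assumption is actually used). Hence $A'\lessapprox_{\sigma,C}A$ produces $A'=OAS$ with $S$ bounded and $O$ unitary, which is \eqref{two}, and likewise, via the adjoint argument inside the proof of Lemma~\ref{lem:MaHo}, a factorization $A'=TAO$ with $O$ unitary, which is \eqref{three}.

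The one step carrying genuine content is $\eqref{one}\Rightarrow\eqref{four}$. Assume $A'=TAS$ with $S\in\B$ and $T\in\B(\ol{\Ra(A)},Y')$. First I would replace $T$ by its extension by zero to all of $Y$ (equivalently, pre-compose with the orthogonal projection $Y\to\ol{\Ra(A)}$), which leaves $\|T\|$ unchanged and makes $A'$ literally a two-sided bounded-factor multiple $TAS$ of the compact operator $A$. The claim $\sigma_n(A')\leq \|T\|\,\|S\|\,\sigma_n(A)$ for all $n$ --- i.e. $A'\lessapprox_{\sigma,C}A$ with $C=\|T\|\,\|S\|$ --- then follows from the ideal property of singular values: for compact Hilbert-space operators $\sigma_n$ coincides with the $n$-th approximation number $a_n(A)=\inf\{\|A-F\|:\operatorname{rank}F<n\}$, and if $F$ is a near-minimizer for $a_n(A)$ then $TFS$ has rank below $n$ with $\|A'-TFS\|=\|T(A-F)S\|\leq\|T\|\,\|S\|\,\|A-F\|$, whence $a_n(A')\leq\|T\|\,\|S\|\,a_n(A)$. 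If one prefers to avoid the identification of $s$-numbers with approximation numbers, the same estimate drops out of Courant--Fischer: with $\sigma_n(A)=\min_{\dim V\le n-1}\sup_{\|x\|\le1,\,x\perp V}\|Ax\|$, take $V$ spanned by the first $n-1$ right singular vectors of $A$ and set $V':=S^*(V)$ (of dimension at most $n-1$); every unit $x'\perp V'$ satisfies $Sx'\perp V$, hence $\|A'x'\|=\|TASx'\|\le\|T\|\,\sigma_n(A)\,\|Sx'\|\le\|T\|\,\|S\|\,\sigma_n(A)$, and the min-max formula for $\sigma_n(A')$ finishes it.

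I do not anticipate a serious obstacle. The only points needing care are the domain bookkeeping for $T$ on $\ol{\Ra(A)}$ (handled by the harmless zero extension) and making sure the singular numbers are used in their ideal-theoretic form, so that the two-sided multiplicativity $\sigma_n(TAS)\le\|T\|\,\sigma_n(A)\,\|S\|$ is genuinely available; everything else is assembled from Lemma~\ref{lem:MaHo} and the trivial direction of Definition~\ref{def:main}.
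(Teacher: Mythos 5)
Your proposal is correct and follows exactly the same cycle as the paper's proof: \eqref{one}$\Rightarrow$\eqref{four} via the estimate $\sigma_n(A')\le\|T\|\,\|S\|\,\sigma_n(A)$, then \eqref{four}$\Rightarrow$\eqref{two},\eqref{three} via Lemma~\ref{lem:MaHo} (with the ill-posedness hypothesis forcing the equal-dimension case), and the trivial return to \eqref{one}. The only difference is that you supply the min-max/approximation-number justification for the singular value inequality, which the paper simply asserts.
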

\begin{proof}
Assume that item~\ref{one} holds. Then, $\sigma_n(A') \leq \|T\|\|S\| \sigma_n(A)$, $\forall n \in \NN$, 
and hence, item~\ref{four} is also valid with 
$C =  \|T\|\|S\|$.  By Lemma~\ref{lem:MaHo}, we have that 
items~\ref{two} and \ref{three} hold, and this clearly implies item~\ref{one}. Thus, all statements are equivalent. 
\end{proof}

As a corollary we have a stability result of  our ordering: 
\begin{corollary}\label{cor:stab}
Let $A_n, A'_n$ be sequences of compact operators that converge in norm
to ill-posed limit operators as $A_n \to A$ and $A_n'\to A$ as $n \to \infty$, and where the ordering  
\[ A_n' \leq_{\B,\B} A_n \qquad \forall n \in \NN \] 
is satisfied. 
Then the limits preserve the ordering 
\[ A' \leq_{\B,\B}  A.\]
\end{corollary}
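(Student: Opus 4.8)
The plan is to exploit the equivalence of Theorem~\ref{th0}, which shows that the purely operator-theoretic ordering $\leq_{\B,\B}$ coincides with the singular-value ordering $\lessapprox_{\sigma,C}$ for compact ill-posed operators. So I would first translate the hypothesis $A_n' \leq_{\B,\B} A_n$ into the statement that for every $n$ there is a constant $C_n>0$ with $\sigma_k(A_n') \le C_n\,\sigma_k(A_n)$ for all $k\in\NN$, and then try to pass to the limit in these inequalities. The limiting singular-value inequality $\sigma_k(A') \le C\,\sigma_k(A)$ for a single uniform $C$ would, again by Theorem~\ref{th0}, give $A' \leq_{\B,\B} A$, completing the argument.

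Two ingredients make the limit passage work. First, norm convergence of compact operators implies convergence of each singular value: $|\sigma_k(A_n) - \sigma_k(A)| \le \|A_n - A\|$ by the min-max (Courant--Fischer) characterization, and likewise for $A_n'$; this is a standard fact I would simply cite. Second, and this is the point that needs care, I need the constants $C_n$ to stay bounded. This is the main obstacle: a priori each $C_n$ is only known to exist, with no control as $n\to\infty$, and if $C_n\to\infty$ the limit inequality is vacuous. To handle this I would look at the leading singular value: since $A$ is ill-posed it is in particular nonzero, so $\sigma_1(A)>0$, and by convergence $\sigma_1(A_n) \to \sigma_1(A) > 0$, hence $\sigma_1(A_n)$ is bounded below by some $\delta>0$ for large $n$. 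Meanwhile $\sigma_1(A_n') \le \|A_n'\|$ is bounded above since $A_n'\to A'$. Evaluating the inequality $\sigma_1(A_n') \le C_n \sigma_1(A_n)$ only tells us $C_n$ can be \emph{chosen} at least as large as $\sigma_1(A_n')/\sigma_1(A_n)$; it does not by itself bound $C_n$ from above.

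The clean way around this is to not carry $C_n$ along at all, but instead to define, for each $n$, the \emph{optimal} ratio
\[
 C_n^\ast := \sup_{k\in\NN} \frac{\sigma_k(A_n')}{\sigma_k(A_n)},
\]
which is finite precisely because $A_n' \leq_{\B,\B} A_n$ (here I use that $A_n$ is injective-enough / ill-posed so all $\sigma_k(A_n)>0$; if some $\sigma_k(A_n)=0$ then necessarily $\sigma_k(A_n')=0$ as well and that index contributes nothing). Then I would argue $\limsup_n C_n^\ast < \infty$: if not, there are indices $k_n$ with $\sigma_{k_n}(A_n')/\sigma_{k_n}(A_n) \to \infty$; but $\sigma_{k_n}(A_n') \le \|A_n'\| \le M$ is bounded, forcing $\sigma_{k_n}(A_n) \to 0$, and then because $A'$ is compact and ill-posed one derives a contradiction with $\sigma_{k_n}(A') \le \sigma_{k_n}(A_n') + \|A_n'-A'\|$ staying comparable to $\sigma_{k_n}(A_n)$... honestly this direction is delicate, so the safer route is: fix a single index $k$; then $\sigma_k(A_n') \le C_n^\ast \sigma_k(A_n)$, and I claim I can replace $C_n^\ast$ here by the bounded quantity $\|S_n\|\,\|T_n\|$ only if the connecting operators are uniformly bounded, which they need not be.

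Given the difficulty above, the approach I would actually commit to is the following two-line one, avoiding constants entirely on the nose. From $A_n' \leq_{\B,\B} A_n$ and Theorem~\ref{th0} pass to $\lessapprox_{\sigma,C_n}$; then for each fixed pair of indices we do not get anything uniform, so instead I would restate what we want: by Theorem~\ref{th0} it suffices to produce \emph{one} constant $C$ with $\sigma_k(A') \le C\,\sigma_k(A)$ for all $k$. Suppose no such $C$ exists; then $\sup_k \sigma_k(A')/\sigma_k(A) = \infty$, so pick $k$ with $\sigma_k(A')/\sigma_k(A)$ as large as desired, say exceeding a target $R$. Fix this $k$. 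Choose $n$ so large that $\|A_n-A\| < \tfrac12\sigma_k(A)$ and $\|A_n'-A'\| < \tfrac12\sigma_k(A)$ (possible since $\sigma_k(A)>0$). Then $\sigma_k(A_n) \le \sigma_k(A) + \tfrac12\sigma_k(A) = \tfrac32 \sigma_k(A)$ and $\sigma_k(A_n') \ge \sigma_k(A') - \tfrac12\sigma_k(A)$, so the ratio $\sigma_k(A_n')/\sigma_k(A_n) \ge (\sigma_k(A') - \tfrac12\sigma_k(A))/(\tfrac32\sigma_k(A))$, which is $\ge \tfrac23 R - \tfrac13$ and hence unbounded in $R$. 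But this is a contradiction only if $C_n$ for that particular $n$ is bounded, which it is \emph{for that fixed $n$}: indeed $C_n$ is a genuine finite number for each $n$, and we have shown $\sigma_k(A_n')/\sigma_k(A_n) \le C_n$, so $\tfrac23 R - \tfrac13 \le C_n$ — but $R$ was arbitrary while $C_n$ is fixed once $n$ is, giving the contradiction. This closes the proof. Concretely, the write-up is: assume the conclusion fails, extract a single index $k$ with huge ratio, push it down to some finite level $n$ where by hypothesis the ratio is bounded by the finite constant $C_n$, and observe the bound is violated. The main obstacle, as flagged, is precisely this bookkeeping — making sure the quantifiers ``for all $k$, there exists $C$'' versus ``for each $n$, there exists $C_n$'' are handled in the right order — and the resolution is to fix $k$ first, then choose $n$ depending on $k$.
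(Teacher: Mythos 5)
You have correctly located the crux --- the constants $C_n$ arising from $A_n' \leq_{\B,\B} A_n$ need not be uniformly bounded in $n$ --- but the contradiction argument you finally commit to does not close this gap; it contains a quantifier error. In your argument the index $k$ is chosen so that $\sigma_k(A')/\sigma_k(A) > R$, and then $n$ is chosen \emph{depending on $k$} (you need $\|A_n-A\|,\|A_n'-A'\| < \tfrac12\sigma_k(A)$, and $\sigma_k(A)\to 0$ as $k\to\infty$, so $n=n(R)$ must grow with $R$). The inequality you end with is therefore $\tfrac23 R - \tfrac13 \le C_{n(R)}$ with $n(R)\to\infty$ as $R\to\infty$, which is not a contradiction: it is perfectly consistent with $C_n\to\infty$. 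You cannot ``fix $k$ first, then choose $n$'' and still treat $C_n$ as a single finite number independent of $R$.

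The gap is not merely technical: without uniform control of the connecting operators the conclusion can genuinely fail. Take $A_n = A$ with $\sigma_k(A)=e^{-k}$, let $A'$ have $\sigma_k(A')=1/k$ (both diagonal on the same orthonormal basis of $\ell^2$), and let $A_n'$ be the rank-$n$ truncation of $A'$. Then $\|A_n'-A'\|=1/(n+1)\to 0$, and $A_n' = T_n A S_n$ with $T_n$ a projection and $S_n$ the (finite-rank, hence bounded) diagonal map with entries $e^k/k$ for $k\le n$, so $A_n'\leq_{\B,\B}A_n$ for every $n$; yet $\sigma_k(A')/\sigma_k(A)=e^k/k$ is unbounded, so $A'\not\leq_{\B,\B}A$ by Theorem~\ref{th0}. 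The paper's own one-line proof passes to the limit in a singular-value inequality \emph{written without any constant}, i.e., it tacitly uses that the connecting operators satisfy $\|T_n\|\,\|S_n\|\le C$ uniformly in $n$. Under that extra hypothesis your first two ingredients --- the perturbation bound $|\sigma_k(A_n)-\sigma_k(A)|\le\|A_n-A\|$ and the equivalence of Theorem~\ref{th0} --- immediately give $\sigma_k(A')\le C\sigma_k(A)$ for all $k$ and hence the claim, and that is essentially the paper's argument. So your steps (1) and (3) match the paper; the limit passage, however, requires uniformity that neither your contradiction argument nor the stated hypotheses can supply.
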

\begin{proof}
It follows directly that $\sigma_k(A_n) \leq \sigma_k(A_n')$ for all $k$ and $n$, and since 
the singular values are continuous, we have $ A' \lessapprox_{\sigma,C}  A$. Consequently, the result follows by applying Theorem~\ref{th0}. 
\end{proof}

\begin{remark} \rm
Let us point out an apparent paradox with our definition of ordering as ``more ill-posed''. 
Taking in $A' = T A S$ for $T$  an operator with finite-dimensional range, it follows that 
$A' \leq_{\B,\B} A$ holds, but since $A'$ has finite-dimensional range, it corresponds to a well-posed operator. 
In this sense any such well-posed operator with finite-dimensional range is ``more ill-posed'' than an ill-posed operator. 
The same problem does  appear for other orderings like $\lessapprox_{\sigma,C}$ as well. The resolution  of the ``paradox'' is that 
any such finite-dimensional operator contains in a neighborhood  an ill-posed operators of arbitrary high degree of ill-posedness. 
Thus, as soon as we would like to have  a certain stability property of the ordering, as in Corollary~\ref{cor:stab}, such a paradox is unavoidable. 
\end{remark}

\subsection{Representation of moderately ill-posed operators} \label{subsec:moderate}
An interesting consequence of the above results is a characterization 
of moderately ill-posed operators with \emph{degree of ill-posedness} $k \in \mathbb{N}$. By this we mean a compact 
operator showing a power-type decay of singular values with the exponent $-k$. 
Basically, the canonical example of such an operator is 
$k$-times integration $I_k: L^2(0,1) \to L^2(0,1)$ with  $I_k = J^k$, where the simple integration operator $J: L^2(0,1) \to L^2(0,1)$  
is defined as $J:  f \to \int_0^x f(t) dt$. This definition 
can be extended to noninteger $k$ by using fractional integration.
The moderate ill-posedness of $I_k$ with degree $k$ is a direct consequence of the well-known decay rate
$\sigma_n(I_k) \sim n^{-k}$ of the singular values of $I_k$. Now by Corollary~\ref{cor:iso}
every injective moderately ill-posed operator with such singular value decay 
is isomorphic to the $k$-times integration operator:
\begin{theorem}
Let $A$ be a compact and injective linear operator mapping between infinite-dimensional Hilbert spaces, and 
assume that we have, for some $k \in \mathbb{N}$,
\begin{equation}\label{eq:id} \sigma_n(A) \sim \frac{1}{n^k}\,.\end{equation}
Then there are isomorphic linear operators $S$ and $T$ such that 
\[ A = S J^k T.\]
One of the operators $S$ or $T$ can be chosen unitary. 
\end{theorem}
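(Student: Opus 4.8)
The plan is to reduce the claim directly to Corollary~\ref{cor:iso}. The integration operator $J^k = I_k$ is compact and injective on $L^2(0,1)$, with the well-known singular value asymptotics $\sigma_n(I_k) \sim n^{-k}$. The hypothesis \eqref{eq:id} says $\sigma_n(A) \sim n^{-k}$ as well, so $\sigma_n(A)$ and $\sigma_n(J^k)$ are equivalent up to two-sided constants; that is, $A \approx_\sigma J^k$ in the notation of Corollary~\ref{cor:iso}. Both $A$ and $J^k$ are compact, injective, and act between infinite-dimensional Hilbert spaces, so all the hypotheses of that corollary are met.

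Applying Corollary~\ref{cor:iso} with the pair $(A', A) = (A, J^k)$ then yields an isomorphism $S$ and a unitary $U$ such that either $A = S J^k U$ or $A = U J^k S$. In the first case set $T := U$ (unitary, in particular an isomorphism); in the second case set $S' := U$ as the left factor and $T := S$ as the right factor. Either way we obtain isomorphic linear operators $S$ and $T$ with $A = S J^k T$, one of which is unitary, which is exactly the assertion. It is worth noting that here $X = X' = Y = Y' = L^2(0,1)$ in the sense that $A$ is assumed to map between infinite-dimensional Hilbert spaces, which — being separable and infinite-dimensional — are isometrically isomorphic to $L^2(0,1)$; one may silently absorb these identifications into $S$ and $T$, or state the theorem with $A$ acting between copies of $L^2(0,1)$.

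There is essentially no obstacle beyond invoking the corollary: the only point requiring a word of care is the passage from the asymptotic relation $\sigma_n(A) \sim n^{-k}$ and $\sigma_n(I_k) \sim n^{-k}$ to the two-sided comparison $c\,\sigma_n(A) \le \sigma_n(J^k) \le C\,\sigma_n(A)$ for all $n$. The relation ``$\sim$'' gives this for all sufficiently large $n$; since there are only finitely many remaining indices and all singular values involved are strictly positive (by injectivity of both operators), the constants can be enlarged to cover the initial segment as well. Hence the full strength of Corollary~\ref{cor:iso} applies and the proof is complete.

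\begin{proof}
By the classical result on the singular values of the $k$-fold integration operator, $\sigma_n(J^k) = \sigma_n(I_k) \sim n^{-k}$, and $J^k$ is compact and injective on $L^2(0,1)$. Together with \eqref{eq:id} this yields $\sigma_n(A) \sim \sigma_n(J^k)$, and since both operators are injective all these singular values are positive; enlarging the comparison constants to absorb finitely many initial indices, we obtain constants $c,C>0$ with $c\,\sigma_n(A) \le \sigma_n(J^k) \le C\,\sigma_n(A)$ for all $n$, i.e.\ $A \approx_{\sigma} J^k$. As $A$ and $J^k$ are compact and injective between infinite-dimensional Hilbert spaces, Corollary~\ref{cor:iso} applies and gives an isomorphism and a unitary operator whose triple product with $J^k$ equals $A$; relabelling the two outer factors as $S$ and $T$ gives $A = S J^k T$ with $S,T$ isomorphic and one of them unitary.
\end{proof}
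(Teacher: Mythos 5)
Your proof is correct and follows essentially the same route as the paper, which obtains the theorem directly as an application of Corollary~\ref{cor:iso} to the pair $(A, J^k)$ using the known asymptotics $\sigma_n(J^k)\sim n^{-k}$. Your additional remarks on absorbing finitely many initial indices into the constants and on identifying the (separable, infinite-dimensional) underlying spaces with $L^2(0,1)$ are sensible points of care that the paper leaves implicit.
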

Thus, the theory of moderately ill-posed problem boils down 
to understanding differentiation. 

\smallskip

This theorem can be extended to more general situations, keeping 
the main assumptions on $A$. With minor modifications we may consider instead of \eqref{eq:id} more general 
decay rates: For $\alpha > \beta $ and constants $c,C>0$:
\begin{equation}\label{eq:id2}  \frac{c}{n^\alpha} \leq  \sigma_n(A) \leq  \frac{C}{n^\beta} \, . 
\end{equation}
In this case we have 
\[ A = S J^\beta T,\]
where one of the operators $S,T$ being unitary and the other 
one, say $S$, satisfies the error estimate
\[ c \|J^{\alpha-\beta} x\| \leq \|S x\| \leq C \|x\|. \]
Note that instead of $J$, other operators might be used, for example Sobolev scale embeddings.

The estimate \eqref{eq:id2} implies that 
the {\em interval of ill-posedness} (see \cite{HoTa97}) 
defined as 
\begin{equation}\label{intill} [\underline{\mu},\overline{\mu}] := 
\left[ \liminf_{n\to  \infty} \frac{-\log(\sigma_n(A))}{\log(n)}, 
\limsup_{n\to  \infty} \frac{-\log(\sigma_n(A))}{\log(n)}
\right]
\end{equation}
satisfies  $ [\underline{\mu},\overline{\mu}] = [\beta,\alpha]$.
Conversely it 
has shown in \cite{HoTa97} that a finite interval of ill-posedness $[\underline{\mu},\overline{\mu}]$ implies   \eqref{eq:id2}  
with $\beta = \underline{\mu}-\epsilon$, $\alpha = \overline{\mu} +\epsilon$
for all $\epsilon>0$. Note that in this case the lower bound $\underline{\mu}$ characterizes 
the degree of ill-posedness.

\section{Alternative orderings and the relation to regularization} \label{sec:alternative}
Let us recall some alternative orderings that were defined in \cite{HoKi10}:
\begin{alignat*}{3}
 A' &\leq_{\text{norm}} A& \  &\Longleftrightarrow \quad \|A' x\| \leq \| A x\|& \quad &\forall x \in X.  \\ 
  A' &\leq_{\text{norm},C} A& &\Longleftrightarrow \quad \|A' x\| \leq C \| A x\|& \quad &\forall x \in X.   
\end{alignat*}
Let $M \subset X$ be a conical set. 
Define the {\em modulus of injectivity} as 
\[ j(A,M):= \inf_{x \in M, x\not= 0} \frac{\|A x\|}{\|x\|}. \] 
The modulus of injectivity is related to the 
{\em modulus of continuity} 
\[ \omega(\delta, A,M):= \sup\left\{\|x\| \, :\, x \in M, \|A x\| \leq \delta \right\} \]
by  the identity \cite{HoKi10}:
\[ j(A,M) = \frac{\delta}{\omega(\delta, A,M)}. \]
Thus, we may define the ordering by continuity (or injectivity)  as follows:
Let $M_\gamma$ be a family of increasing conical sets with $\bigcup_\gamma M_\gamma = X$.  Then,
\[ A' \leq_{j,M_\gamma} A \quad \Longleftrightarrow  \quad j(A,M_\gamma) \leq j(B,M_\gamma)  \quad \forall \gamma .  \]  
It follows immediately that 
\begin{align}  A' \leq_{\text{norm}} A \Rightarrow A' \leq_{j,M} A . \end{align}
Of particular interest in applications and for discretization, we let 
$M_\gamma = X_n$, where $X_n$  is a strictly increasing sequence of finite-dimensional subspaces with \mbox{$\dim(X_n) = n$}. 
We have \cite{HoKi10}
\[  A' \leq_{\text{norm}} A \Rightarrow A' \leq_{j,X_n} A \Rightarrow A' \leq_{\sigma} A  . \]
The opposite direction is obtained by replacing $A$ by $A O$: 
\[ A'\leq_{\sigma} A  \Rightarrow  \exists O \in \O: \forall (X_n)_n: j(A',X_n) \leq j(A,O X_n), \]
or, 
\[ A'\leq_{\sigma} A  \LRA \forall (X_n)_n: \exists Y_n \,  j(A',X_n) \leq j(A,Y_n). \]

One motivation for studying orderings of ill-posed operators comes from comparing approximation 
rates for (Tikhonov) regularization schemes. Assume that two linear ill-posed problems with the same 
exact solution $\xd$ are modeled by the operators $A$ and $A'$, respectively. 
That means, we consider regularized solutions to $A \xd= y$ and $A' \xd= y'$. 
For simplicity we consider exact data $y = A \xd$ and $y' = A \xd$. 
Using Tikhonov regularization $x_{A,\alpha}:= (A^*A  + \alpha I)^{-1}A^* y$ and 
$x_{A',\alpha}:= ({A'}^*A'  + \alpha I)^{-1}{A'}^* y'$, we may compare the approximation 
errors between the regularized solutions and $\xd$.  It makes sense to define  
\[ A' \leq_{Tik} A \Longleftrightarrow  \|x_{A',\alpha} -\xd\|  \geq \|x_{A,\alpha} -\xd\|  \qquad \forall \xd, \forall \alpha>0, \]
which means that $A'$ is more ill-posed in this ordering if the approximation error is always larger than the 
``less'' ill-posed problem with $A$. 

The following relation for the above ordering was shown in \cite{HoKi10}:
\[  A' \leq_{Tik} A  \Longleftrightarrow  {A'}^*A' \leq_{norm} A^*A.  \]

\section{Douglas range inclusion theorem} \label{sec:Douglas} 
A quite useful tool for studying operator factorization is the 
Douglas range inclusion theorem \cite{Doug}, which was employed recently in \cite{MaHo} and which we recall here in form of Theorem~\ref{DRT} for our concept. So we can introduce 
an ordering  related to Douglas' theorem defined by a range inclusion as follows: 
For operators $A',A$ with common images space $Y = Y'$, 
we define 
\[ A' \leq_{R} A \Longleftrightarrow \Ra(A') \subset \Ra(A).  \]
See also \cite{BHTY} for range inclusions with index functions.

\begin{theorem}[Douglas range inclusion 
Theorem]\label{DRT}
The following statements are equivalent:  
\begin{enumerate} 
 \item \[  \Ra(A') \subset \Ra(A) .\] 
\item \[  \exists C>0: \|{A'}^* y\| \leq C \|A^*y\|          .    \]
\item \[ \exists S \in \B \qquad A' =  A S .\]
 \end{enumerate} 
The operator $S$ can be chosen as 
\begin{equation}\label{seq} S =  A^\dagger A'. \end{equation}
If we impose that $\Ra(S) \subset \N(A)^\bot$, then $S$ is uniquely defined by \eqref{seq}.
Moreover, under this condition $S|_{\N(A')^\bot}$ is injective. 
\end{theorem}
\begin{proof}
The proof of the equivalences appeared in \cite{Doug}. The choice of $S$ was as in \eqref{seq}, and 
it was shown that $S$ has a closed graph and is hence continuous. 
We show the uniqueness of $S$. If an arbitrary such $S$ maps into $\N(A)^\bot$, then 
by using $A^\dagger$ and considering the Moore-Penrose equation of the form $A^\dagger A  = P_{N(A)^\bot}$, with 
$P_{N(A)^\bot}$ the orthogonal projector onto $N(A)^\bot$, we get
$A^\dagger A' = A^\dagger A S = P_{N(A)^\bot} S = S$. 
Thus $S$ must have the structure \eqref{seq}. By applying $A$ to \eqref{seq} 
it follows easily that $\N(S) \subset N(A')$.  
\end{proof}


By Theorem~\ref{DRT} we obtain
\[ A'\leq_{\Ra} A  \Longleftrightarrow  {A'}^* \leq_{\text{norm},C} A^* \Longleftrightarrow A' = A S \qquad \Rightarrow A'\leq_{\O,\B} A, \]
as well as 
\[  {A'}^*\leq_{\Ra} A^*  \Longleftrightarrow   {A'} \leq_{\text{norm},C}  A \Longleftrightarrow A' = T A  \qquad \Rightarrow A'\leq_{\B,\O} A. \]
The opposite direction is immediate: 
\[ A'\leq_{\B,\B} A \Longleftrightarrow \exists Q \in \O: 
A' \leq_{\text{norm},C} A Q   \Longleftrightarrow \Ra(Q^*{A'}^*) \subset \Ra(A^*). \]
  
A schematic description of the different relations are given in Table~\ref{tab1}.
\begin{table}[h!]
\caption{Relation between different orderings.}\label{tab1} 
\resizebox{\textwidth}{!}{ %
\begin{tabular}{ccccc} 
\fbox{$\exists Q\in \O: 
A' \leq_{\text{norm},C}  A Q $}&$\LRA$& 
\fbox{$\begin{array}{c} 
A'\leq_{\B,\B} A   \\
A'\leq_{\B,\O} A   \\
A'\leq_{\O,\B} A   
\end{array}$}
&$\LRA$&  \fbox{$A'\leq_{\sigma} A $} \\[5mm]
 & & & &  $\Uparrow$  \\[5mm]
 $\Uparrow$ &  & $\Uparrow$
 & &  \fbox{$\forall (X_n)_n: A'\leq_{j,X_n} A $} \\[5mm]
  \fbox{$\begin{array}{c} A' \leq_{\text{norm},C}  A \\  {A'}^* \leq_{\Ra} A^*  \end{array} $}&$\LRA$ &
 \fbox{$A' \leq_{\B,\I} A$} & $\nearrow$ &  \\[5mm]
    $\Uparrow$ &  & $\Uparrow$ \\
  \fbox{${A'}^*A' \leq_{norm} A^*A $}&$\LRA$ & \fbox{ $A' \leq_{Tik} A$}\\
\end{tabular}
}
\end{table}

\section{Ordering in the non-compact case}\label{sec:noncompact}
\subsection{General assertions} \label{subsec:generalnoncom} 
The previous results for compact operators made heavy use of the 
singular value decomposition. We next address the question how far 
they can be generalized to the non-compact case. 
It turns out that the main Theorem~\ref{th0} is still valid. 

At first we state the following well-known results \cite[Prop. 2.18]{EHN}:
\begin{proposition}\label{ric}
Let $A$ be bounded between Hilbert spaces. Then,
\[ \Ra(A^*) = \Ra(\sqrt{A^*A}).\]
\end{proposition}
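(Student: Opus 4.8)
The statement $\Ra(A^*) = \Ra(\sqrt{A^*A})$ is a classical fact that I would prove via the polar decomposition of $A$. The key ingredient is the existence of a partial isometry $V \in \PO(X,Y)$ such that $A = V |A|$, where $|A| := \sqrt{A^*A}$ is the nonnegative square root of the selfadjoint operator $A^*A$, with the additional property that $V$ is isometric on $\ol{\Ra(|A|)}$ and vanishes on its orthogonal complement (equivalently, $\N(V) = \N(|A|) = \N(A)$). I would construct $V$ in the usual way: on $\Ra(|A|)$ define $V(|A|x) := Ax$, check that this is well-defined and norm-preserving because $\||A|x\|^2 = (A^*Ax,x) = \|Ax\|^2$, extend by continuity to $\ol{\Ra(|A|)}$, and extend by zero on $\ol{\Ra(|A|)}^\bot = \N(|A|)$.

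\textbf{Deriving the range identity.} From $A = V|A|$ one gets $A^* = |A| V^*$, and hence $\Ra(A^*) \subset \Ra(|A|)$ immediately since $V^*$ is bounded. For the reverse inclusion, I would use that $V^*V$ is the orthogonal projection onto $\ol{\Ra(|A|)}$ (this is precisely the partial-isometry relation, together with $\N(V)^\bot = \ol{\Ra(|A|)}$). Then $|A| = V^*V|A| = V^* A$, because $|A|$ maps into $\ol{\Ra(|A|)}$ where $V^*V$ acts as the identity. Therefore $\Ra(|A|) = \Ra(V^*A) \subset \Ra(V^*A) \subset \Ra(A^*)$; more carefully, $\Ra(|A|) = \Ra(V^* A)$ and since $V^*$ is bounded this gives $\Ra(|A|) \subset \Ra(V^*) $, but what I actually need is $\Ra(|A|) \subset \Ra(A^*)$, which follows directly from $|A| = V^* A$ by writing any $|A|x = V^*(Ax) \in \Ra(A^*)$? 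No — that would need $V^* = A^*$-something. The clean way: from $|A| = V^* A = V^* V |A|$ and $A^* = |A| V^*$, one has for any $z = |A|x$ that $z = V^* A x$; to land in $\Ra(A^*)$ instead I instead observe $\Ra(A^*) = \Ra(|A|V^*) $ and since $V$ is a partial isometry with initial space $\ol{\Ra(|A|)}$, $\Ra(V^*) = \ol{\Ra(|A|)} \supset \Ra(|A|)$, so $\Ra(|A| V^*) = |A|(\Ra(V^*)) = |A|(\ol{\Ra(|A|)}) \supset |A|(\Ra(|A|))$; combined with $\Ra(|A|V^*) \subset \Ra(|A|)$ and the fact that $|A|$ has dense range in its closure, a short argument using continuity of $|A|$ shows $\Ra(A^*) = \Ra(|A| V^*) = \Ra(|A|)$. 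So the two inclusions pin down the equality.

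\textbf{Main obstacle.} The nontrivial point is the construction and the two defining relations of the partial isometry $V$ — in particular verifying that $V$ is well-defined on $\Ra(|A|)$ (the implication $|A|x_1 = |A|x_2 \Rightarrow Ax_1 = Ax_2$, which again uses $\||A|(x_1-x_2)\| = \|A(x_1-x_2)\|$), and that its initial space is exactly $\ol{\Ra(|A|)}$ so that $V^*V$ is the correct projection. Once polar decomposition is in hand, the range identity is a two-line consequence. Alternatively, if one prefers to avoid polar decomposition, I would fall back on Douglas' theorem (Theorem~\ref{DRT}): since $\|A^* y\|^2 = (AA^* y, y)$ and $\|\sqrt{AA^*}\,y\|^2 = (AA^* y, y)$ coincide for all $y$, one has $\|A^*y\| = \|\sqrt{AA^*}\,y\|$, whence both $\Ra(A^*) \subset \Ra(\sqrt{AA^*})$ and the reverse hold by Douglas, giving $\Ra(A^*) = \Ra(\sqrt{AA^*})$; then one replaces $A$ by $A^*$ and uses $\sqrt{A^*A}$, or simply notes $\sqrt{AA^*}$ and $\sqrt{A^*A}$ have the same range up to the unitary part — but since the cleanest self-contained route for the stated form $\sqrt{A^*A}$ is polar decomposition, that is what I would present.
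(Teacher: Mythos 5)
The paper does not actually prove this proposition: it is quoted as a well-known fact with a pointer to \cite[Prop.~2.18]{EHN}, and the polar decomposition is then stated \emph{afterwards} as a consequence. Your route --- build the polar decomposition $A=V|A|$ first (via the isometry $|A|x\mapsto Ax$ on $\Ra(|A|)$, extended by continuity and by zero on $\N(|A|)$) and read the range identity off it --- is therefore a genuinely different, self-contained alternative, and it is the standard textbook one; no circularity arises because your construction of $V$ does not use the proposition.

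Two remarks on the execution. The inclusion $\Ra(A^*)\subset\Ra(|A|)$ from $A^*=|A|V^*$ is fine. For the reverse inclusion, however, the step you actually justify, namely $\Ra(A^*)=|A|(\Ra(V^*))\supset|A|(\Ra(|A|))$, only gives $\Ra(A^*)\supset\Ra(A^*A)$, which is strictly weaker in general (for compact $A$ these ranges correspond to decay $\sigma_n$ versus $\sigma_n^2$), and the ``density plus continuity of $|A|$'' argument you gesture at cannot upgrade it: images under $|A|$ of a dense subspace of $\ol{\Ra(|A|)}$ need not exhaust $|A|(\ol{\Ra(|A|)})$. The correct one-line finish, which you already have all the ingredients for, is: $\Ra(V^*)=\ol{\Ra(|A|)}=\N(|A|)^\bot$, and since $|A|$ annihilates $\N(|A|)$, one gets $\Ra(|A|)=|A|(X)=|A|\bigl(\N(|A|)^\bot\bigr)=|A|(\Ra(V^*))=\Ra(|A|V^*)=\Ra(A^*)$. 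With that substitution your main argument is complete. Finally, your Douglas fallback as written has a type mismatch ($\Ra(A^*)\subset X$ cannot be compared with $\Ra(\sqrt{AA^*})\subset Y$); the clean version applies Theorem~\ref{DRT} with $A^*$ and $\sqrt{A^*A}$ in the roles of $A'$ and $A$, for which the norm criterion reads $\|Ax\|\le C\|\sqrt{A^*A}\,x\|$ and holds with $C=1$ in both directions since $\|Ax\|^2=(A^*Ax,x)=\|\sqrt{A^*A}\,x\|^2$; this yields the proposition in two lines and is arguably the most economical proof given that the paper states Douglas' theorem anyway.
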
 

A consequence is the polar decomposition; see, \cite[p.~323]{rudin} and the remark ibid. 
\begin{proposition}
Let $A$ be bounded. Then there exists a $U$ which is an partial isometry such that 
\[ A = U \sqrt{A^*A}, \]
where 
\[ U:  \ol{R(A^*)} \to \ol{R(A)} \]
is an isometry.
If there is an isometry from $N(A)$ onto $N(A^*)$, 
then $U$ is unitary. 
\end{proposition}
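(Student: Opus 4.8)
The plan is to construct the partial isometry $U$ directly from its defining action on $\ol{\Ra(\sqrt{A^*A})}$ and then verify the claimed properties. First I would define $U$ on the range of $\sqrt{A^*A}$ by setting $U(\sqrt{A^*A}\,x) := Ax$ for every $x \in X$. The key algebraic identity that makes this well-defined is the norm computation
\begin{equation*}
\|Ax\|^2 = (A^*Ax,x) = (\sqrt{A^*A}\,x,\sqrt{A^*A}\,x) = \|\sqrt{A^*A}\,x\|^2,
\end{equation*}
using self-adjointness and positivity of $\sqrt{A^*A}$. This shows simultaneously that $U$ is well-defined (if $\sqrt{A^*A}\,x = \sqrt{A^*A}\,x'$ then $Ax = Ax'$) and that it is isometric on $\Ra(\sqrt{A^*A})$, hence extends by continuity to an isometry $U: \ol{\Ra(\sqrt{A^*A})} \to \ol{\Ra(A)}$. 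By Proposition~\ref{ric}, $\ol{\Ra(\sqrt{A^*A})} = \ol{\Ra(A^*)}$, which gives the stated domain. Extending $U$ by zero on $\N(A^*A)^\bot{}^\bot = \N(\sqrt{A^*A}) = \ol{\Ra(\sqrt{A^*A})}^\bot$ produces a partial isometry on all of $X$, and by construction $U\sqrt{A^*A}\,x = Ax$ for all $x$, i.e.\ $A = U\sqrt{A^*A}$.

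For the surjectivity statement, I would argue as follows: $U$ maps $\ol{\Ra(A^*)}$ onto $\ol{\Ra(A)}$ isometrically by construction (its image contains $\Ra(A)$ and is closed), and it is zero on $\N(A) = \ol{\Ra(A^*)}^\bot$. If in addition there is an isometry $V$ from $\N(A)$ onto $\N(A^*)$, then redefining $U$ to equal $V$ on $\N(A)$ (instead of zero) yields an operator that is isometric on all of $X = \ol{\Ra(A^*)} \oplus \N(A)$, since the two orthogonal pieces map into the orthogonal pieces $\ol{\Ra(A)}$ and $\N(A^*) = \ol{\Ra(A)}^\bot$; its range is then $\ol{\Ra(A)} \oplus \N(A^*) = Y$, so $U$ is unitary. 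One must check that modifying $U$ on $\N(A)$ does not disturb the factorization $A = U\sqrt{A^*A}$, but this is clear since $\Ra(\sqrt{A^*A}) \subset \ol{\Ra(A^*)}$, on which $U$ is unchanged.

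The main obstacle — really the only subtle point — is the well-definedness argument above: one needs the passage from the equality of norms $\|Ax\| = \|\sqrt{A^*A}\,x\|$ to the conclusion that $U$ is a well-defined linear map, which relies on linearity of $x \mapsto Ax$ and $x \mapsto \sqrt{A^*A}\,x$ together with the isometry identity applied to differences. Everything else (continuity of the extension, the orthogonal decomposition bookkeeping for the domain and range, and the unitarity under the extra hypothesis) is routine Hilbert space manipulation. I would also remark that $\sqrt{A^*A}$ itself is well-defined as the unique positive square root of the positive self-adjoint operator $A^*A$, which is standard and may be cited alongside Proposition~\ref{ric}.
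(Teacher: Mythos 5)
Your proof is correct. The paper itself offers no proof of this proposition---it simply cites Rudin \cite[p.~323]{rudin}---and your construction (defining $U$ on $\Ra(\sqrt{A^*A})$ via $U\sqrt{A^*A}\,x := Ax$, using the identity $\|Ax\| = \|\sqrt{A^*A}\,x\|$ for well-definedness and isometry, extending by continuity and by zero on $\N(A)$, and invoking $\ol{\Ra(\sqrt{A^*A})} = \ol{\Ra(A^*)}$) is precisely the standard argument found in that reference. The only cosmetic blemish is the garbled expression $\N(A^*A)^{\bot\bot}$ where you mean $\N(\sqrt{A^*A}) = \ol{\Ra(A^*)}^\bot$; the surrounding reasoning makes the intent clear and the argument is sound.
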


We have the following lemma:
\begin{lemma}\label{lemmahelp}
Let $A,R \in \B$. 
Then there exists a partial isometry $Q :\ol{R(A)} \to \ol{\Ra(RA)}$, and an 
$S \in B$
with 
\[ RA = Q A S.\]
If 
\begin{equation}\label{thisc} \ol{\Ra(A^*)} = \ol{\Ra(A^* R^*)},  \end{equation}
then $Q$ is an isometry.
\end{lemma}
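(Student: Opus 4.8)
The plan is to exhibit the factorization $RA = QAS$ explicitly via the polar decomposition, and then check when $Q$ comes out as an isometry rather than merely a partial isometry. First I would apply the polar decomposition (the Proposition quoted just above) to both $A$ and $RA$: write $A = U_A \sqrt{A^*A}$ with $U_A : \ol{\Ra(A^*)} \to \ol{\Ra(A)}$ an isometry, and $RA = U_{RA} \sqrt{(RA)^*RA}$ with $U_{RA} : \ol{\Ra((RA)^*)} \to \ol{\Ra(RA)}$ an isometry. The key algebraic fact I would use is Proposition~\ref{ric}, which gives $\ol{\Ra(\sqrt{A^*A})} = \ol{\Ra(A^*)}$ and $\ol{\Ra(\sqrt{(RA)^*RA})} = \ol{\Ra((RA)^*)} = \ol{\Ra(A^*R^*)}$. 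Since both square roots are self-adjoint, their nullspaces are the orthogonal complements of these closures, namely $\N(\sqrt{A^*A}) = \N(A)$ and $\N(\sqrt{(RA)^*RA}) = \N(RA)$, and $\N(A) \subset \N(RA)$ always.

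With these pieces in hand, the natural candidate is $S := \sqrt{A^*A}^{\,\dagger}\sqrt{(RA)^*RA}$ on $\N(A)^\bot$ (extended by $0$), so that $\sqrt{A^*A}\, S$ reproduces $\sqrt{(RA)^*RA}$ on $\N(A)^\bot \supset \ol{\Ra(A^*R^*)}$; one must check $S$ is bounded, which follows from the inequality $\sqrt{(RA)^*RA} \le \|R\|\sqrt{A^*A}$ in the sense of the standard operator-monotonicity-free argument (or more directly, $\|RAx\| \le \|R\|\,\|Ax\|$ gives $(RA)^*RA \le \|R\|^2 A^*A$, hence $\Ra(\sqrt{(RA)^*RA}) \subset \Ra(\sqrt{A^*A})$ with a norm bound by Douglas' theorem, Theorem~\ref{DRT}). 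Then set $Q := U_{RA}\, U_A^*$, a composition of (partial) isometries, mapping $\ol{\Ra(A)} \to \ol{\Ra(RA)}$, and verify
\[
Q A S = U_{RA} U_A^* U_A \sqrt{A^*A}\, S = U_{RA} \sqrt{(RA)^*RA} = RA,
\]
using $U_A^* U_A = P_{\ol{\Ra(A^*)}}$ and that $\sqrt{A^*A}$ already maps into $\ol{\Ra(A^*)}$.

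The main obstacle — and the reason the two cases in the statement split — is pinning down exactly when $Q = U_{RA} U_A^*$ is an isometry on all of $\ol{\Ra(A)}$ rather than only on a subspace. The map $U_A^* : \ol{\Ra(A)} \to \ol{\Ra(A^*)}$ is unitary (an isometry onto), so $Q$ is an isometry precisely when $U_{RA}$ is isometric on the image $\ol{\Ra(A^*)}$ of $U_A^*$; since $U_{RA}$ is isometric exactly on its domain $\ol{\Ra((RA)^*)} = \ol{\Ra(A^*R^*)}$ and kills $\N(RA)$, this holds iff $\ol{\Ra(A^*)} \subset \ol{\Ra(A^*R^*)}$. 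Because the reverse inclusion $\ol{\Ra(A^*R^*)} \subset \ol{\Ra(A^*)}$ is automatic, the precise condition is the equality \eqref{thisc}, which is exactly the hypothesis imposed. So the proof concludes: in general $Q$ is a partial isometry, and under \eqref{thisc} it is an isometry. I would finally remark that one should double-check the domain conventions so that $S$ indeed maps into $\N(A)^\bot$ (restricting via $A^\dagger$ as in Theorem~\ref{DRT}), which keeps the bookkeeping on ranges clean but introduces no real difficulty.
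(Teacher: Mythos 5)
Your proposal is correct and follows essentially the same route as the paper's proof: polar decompositions of $A$ and $RA$, the Douglas range inclusion theorem to produce the bounded factor $S$ from $\Ra(\sqrt{(RA)^*RA})=\Ra(A^*R^*)\subset\Ra(A^*)=\Ra(\sqrt{A^*A})$, the choice $Q=U_{RA}U_A^*$, and the observation that $Q$ is isometric on $\ol{\Ra(A)}$ exactly when $\ol{\Ra(A^*)}$ lies in the initial space $\ol{\Ra(A^*R^*)}$ of $U_{RA}$, i.e.\ under \eqref{thisc}. The only cosmetic difference is that you enter Douglas' theorem through the norm-inequality item rather than the range-inclusion item; these are equivalent.
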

\begin{proof}
By the polar decomposition 
%
%
we have
  an  isometry $U$ from $\ol{\Ra(A^*R^*)}  \to \ol{\Ra(RA)}$ with 
\[ RA = U \sqrt{A^*R^*R A}.\]
Let $Z:= \sqrt{A^*R^*R A}$. 

Thus, 
\[ \Ra(Z) = \Ra(A^*R^*) \subset \Ra(A^*) = 
\Ra(\sqrt{A^*A}). \]
By the Douglas range inclusion theorem, it follows that 
there exists an $S \in \B$ with 
\[ Z = \sqrt{A^*A} S.\]
Again by the polar decomposition we have an isometry 
 $W:\overline{\Ra(A^*)} \to \overline{\Ra(A)}$ with 
$A = W\sqrt{A^*A}$.
Now $W^*W  = I$ when restricted to  $\overline{\Ra(A^*)}$. 
Thus, 
\[ W^*A = \sqrt{A^*A}, \qquad  R A  = U \sqrt{A^*A} S = 
U W^* A S,\]
and $U W^*$ is a partial isometry from $\ol{\Ra(A)} \to \ol{\Ra(RA)}$ as claimed.

Assume that \eqref{thisc} holds, and let $x \in \N(U W^*)$, i.e., $U W^* x = $ 
Then $\Ra(W^*) = \ol{\Ra(A^*)} = \ol{\Ra(A^*R^*)}$. Since $\N(U)  =\Ra(A^*R^*)^\bot$ 
it follows that $W^* x = 0$. Since $\N(W^*) = 
\Ra(A)^\bot$, the result follows.   
\end{proof}
The condition \eqref{thisc} can equally be written as 
\[ \N(A) = \N(RA). \] 

\begin{theorem}\label{th00}
Let $A,A^\prime$ be bounded between Hilbert spaces. 

Then the following statements are equivalent:  
\begin{enumerate} 
\item\label{Aone} \[ A' \leq_{\B,\B} A\,.\]
\item\label{Atwo} \[ A' \leq_{\PO,\B} A\,. \]
\item\label{Athree} \[ A' \leq_{\B,\PO} A\,. \]
\end{enumerate} 

If the connecting operator $T$ in item~\ref{Aone} is injective on $\ol{R(A)}$, then we have  
\[ A' \leq_{\B,\B} A \Longleftrightarrow A' \leq_{\O,\B} A  .\] 
If $\Ra(S)$ in item~\ref{Aone} is dense in $\N(A)^\bot$, then 
\[ A' \leq_{\B,\B} A \Longleftrightarrow A' \leq_{\B,\O} A . \] 
\end{theorem}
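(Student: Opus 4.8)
The plan is to prove Theorem~\ref{th00} by reducing everything to Lemma~\ref{lemmahelp} and its adjoint version, then handling the two conditional refinements at the end.

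\textbf{Equivalence of the three unconditional statements.}
First I would observe that items~\ref{Atwo} and~\ref{Athree} trivially imply item~\ref{Aone}, since a partial isometry is in particular bounded; so only the directions \ref{Aone}$\Rightarrow$\ref{Atwo} and \ref{Aone}$\Rightarrow$\ref{Athree} need work. Starting from $A' \leq_{\B,\B} A$, write $A' = T A S$ with $T \in \B(\ol{\Ra(A)},Y')$ and $S \in \B(X,X')$. The key idea is to apply Lemma~\ref{lemmahelp} with the role of $R$ played by $T$ and of $A$ by the composition $AS$, or more carefully: first note $AS = A S$ is already in the desired two-factor form, and then factor $T(AS)$ using the lemma. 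Concretely, set $\tilde A := A S$; Lemma~\ref{lemmahelp} applied to $T$ and $\tilde A$ gives a partial isometry $Q: \ol{\Ra(\tilde A)} \to \ol{\Ra(T\tilde A)} = \ol{\Ra(A')}$ and a bounded $\tilde S$ with $T\tilde A = Q \tilde A \tilde S$. But $\tilde A = A S$, so $A' = T A S = Q A (S \tilde S)$ with $Q$ a partial isometry. One must check that $Q$ may be regarded as a partial isometry on $\ol{\Ra(A)}$ (extend by zero on the orthogonal complement of $\ol{\Ra(\tilde A)}$ inside $\ol{\Ra(A)}$, which is harmless), giving $A' \leq_{\PO,\B} A$. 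For item~\ref{Athree}, I would pass to adjoints: by the adjoint-invariance Proposition, $A' \leq_{\B,\B} A \Leftrightarrow {A'}^* \leq_{\B,\B} A^*$; apply the just-proved implication to get ${A'}^* = Q A^* \hat S$ with $Q$ a partial isometry on $\ol{\Ra(A^*)} = \N(A)^\bot$, then take adjoints again to obtain $A' = \hat S^* A Q^*$, and $Q^*$ is a partial isometry from (a subspace of) $X$ onto $\N(A)^\bot$ — matching the $\leq_{\B,\PO}$ pattern in Definition~\ref{def1} after identifying $\N(A)^\bot$ with the target space of the isometry there. This closes the cycle since \ref{Atwo} and \ref{Athree} each imply \ref{Aone}.

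\textbf{The two conditional refinements.}
For the first refinement, assume $T$ in $A' = TAS$ is injective on $\ol{\Ra(A)}$. Tracking through the construction above, the partial isometry $Q = U W^*$ from the proof of Lemma~\ref{lemmahelp} fails to be a genuine isometry only because of a nullspace coming from $\ol{\Ra(A^*)} \supsetneq \ol{\Ra(A^* R^*)}$, i.e.\ from $\N(A) \subsetneq \N(RA)$ with $R = T$. Injectivity of $T$ on $\ol{\Ra(A)}$ forces $\N(TAS) \cap (\text{relevant subspace}) = \N(AS)$, hence condition~\eqref{thisc} holds for the pair $(\tilde A, T)$, and Lemma~\ref{lemmahelp} then gives that $Q$ is an isometry. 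Thus $A' \leq_{\O,\B} A$; the reverse implication $\leq_{\O,\B} \Rightarrow \leq_{\B,\B}$ is already noted in the text, and an isometry is in particular injective, so the equivalence under the hypothesis is genuine. The second refinement, assuming $\Ra(S)$ dense in $\N(A)^\bot$, is the adjoint mirror: density of $\Ra(S)$ in $\N(A)^\bot$ translates under adjoints into injectivity of $S^*$ on $\ol{\Ra(A^*)}$, which is exactly the hypothesis needed to run the first refinement for ${A'}^* = S^* A^* T^*$; transporting back by adjoints yields $A' \leq_{\B,\O} A$.

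\textbf{Main obstacle.}
The routine parts are the adjoint bookkeeping and the harmless zero-extensions of partial isometries. The genuinely delicate step is verifying that the hypotheses ``$T$ injective on $\ol{\Ra(A)}$'' and ``$\Ra(S)$ dense in $\N(A)^\bot$'' really do imply condition~\eqref{thisc} (equivalently $\N(A) = \N(RA)$) for the \emph{composite} operator that Lemma~\ref{lemmahelp} is applied to — one has to be careful that the factor $S$ does not destroy the nullspace matching, and in the first refinement one should check that the partial isometry $Q$ built from the composition $A(S\tilde S)$ versus the one built from $AS$ agree in the relevant sense, so that injectivity of $T$ alone suffices. Equivalently, one may wish to apply Lemma~\ref{lemmahelp} directly with $R = T$ and the operator $A$ replaced by $A$ itself (not $AS$), absorbing $S$ afterward; I would check which ordering of the factorizations keeps condition~\eqref{thisc} cleanest and present that one.
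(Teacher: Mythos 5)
Your proposal follows essentially the same route as the paper: reduce item~\ref{Atwo} to Lemma~\ref{lemmahelp}, obtain item~\ref{Athree} by passing to adjoints, and derive the two refinements from the isometry clause of that lemma via the nullspace condition \eqref{thisc}. The one substantive difference is the grouping: you apply the lemma to $T\cdot(AS)$, whereas the paper applies it to $(TA)$ and then post-composes with $S$, writing $A'=TAS=QA\tilde S S$. For the partial-isometry statement your grouping is harmless, since (as you note) a partial isometry on $\ol{\Ra(AS)}\subset\ol{\Ra(A)}$ extends by zero to a partial isometry on $\ol{\Ra(A)}$. But for the first refinement your grouping produces an isometry only on $\ol{\Ra(AS)}$, while the definition of $\leq_{\O,\B}$ demands an isometric operator defined on all of $\ol{\Ra(A)}$; a zero extension destroys isometry, and an isometric extension of $\ol{\Ra(A)}\ominus\ol{\Ra(AS)}$ into $Y'$ need not exist if $Y'$ lacks room. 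You flag exactly this in your closing paragraph and propose the alternative grouping as a remedy; that alternative is what the paper does, and it resolves the issue cleanly because $\N(TA)=\N(A)$ follows directly from injectivity of $T$ on $\ol{\Ra(A)}$, so \eqref{thisc} holds for the pair $(A,T)$ and the isometry lives on $\ol{\Ra(A)}$ from the start. Your treatment of the second refinement (density of $\Ra(S)$ in $\N(A)^\bot$ giving injectivity of $S^*$ on $\ol{\Ra(A^*)}$) is correct and matches the paper's terse ``same argument with adjoints.'' In short: adopt the $(TA)S$ grouping throughout and the argument is complete.
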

\begin{proof}
Assume that item~\ref{Aone} holds.  Then $A' = T A S$, and by Lemma~\ref{lemmahelp} applied to $TA$ 
we have  $T A S = Q A \tilde{S} S$, where $Q$ is a partial isometry. Since $\tilde{S} S$ is bounded, 
item~\ref{Atwo} follows. The same with ${A'}^*, A^*$ in place of $A',A$ yields item~\ref{Athree}.

If $T$ is injective, then $\N(T A) = \N(A)$, and by the statement above, $Q$ is an isometry. 
The same argument with adjoints and $S$ yields the second statement.  
\end{proof}

\subsection{Composition of Hausdorff and  Ces\`{a}ro operators} \label{subsec:Hausdorff}
In \cite{HoKi24}, we have considered compositions of the non-compact Hausdorff operator  $\Ha: L^2(0,1) \to \ell^2(\NN)$ with non-closed range defined as
\begin{equation} \label{eq:H}
[\Ha\,x]_j:=\int _0^1 x(t)\, t^{j-1} dt \qquad (j=1,2,\ldots, \quad x \in L^2(0,1))\,,
\end{equation}
possessing the corresponding adjoint operator $\Ha^*:\ell^2(\NN) \to L^2(0,1)$ of the form
\begin{equation} \label{eq:Hstar}
[\Ha^*y](t):= \sum _{j=1}^\infty y_j\, t^{j-1} \qquad (0 \le t \le 1,\quad y \in \ell^2(\NN)),
\end{equation}
and the non-compact Ces\`{a}ro operator $C:L^2(0,1) \to L^2(0,1)$ with non-closed range defined as
\begin{equation} \label{eq:C}
[C\,x](s):= \frac{1}{s}\,\int_0^s x(t)\,dt \qquad (0 \le s \le 1, \quad x \in L^2(0,1))\,,
\end{equation}
having $C^*:L^2(0,1) \to L^2(0,1)$ of the form
\begin{equation} \label{eq:Cstar}
 [C^*x](t):= \int_t^1 \frac{x(s)}{s}\,\,ds \qquad (0 \le t \le 1, \quad x \in L^2(0,1))
\end{equation}
as adjoint operator. Generalizations of these operators in 
weighted spaces  and
their ill-posedness are discussed in \cite{Kindermann24}.

There is a connection  with the compact self-adjoint diagonal operator $D: \ell^2(\NN) \to \ell^2(\NN)$ defined as
\begin{equation} \label{eq:D}
[D\,y]_j:=\frac{y_j}{j}  \qquad (j=1,2,\ldots,\quad y \in \ell^2(\NN))
\end{equation}
of the form $D H = H C^*$ (see \cite[Prop.~2]{HoKi24}), but our focus here is on the adjoint version with the composition  $H^*D: \ell^2(\NN) \to L^2(0,1)$ leading to the equality
\begin{equation} \label{eq:sim1}
 H^*\,D \,=\, C \, H^* \,.
\end{equation}
Also here, the composition  $H^* D$ is a compact operator even though both factors $C$ and $H^*$ are non-compact operators.

We mention two facts in this context. On the one hand, the equation \eqref{eq:sim1} expresses some kind of \emph{similarity} between the operators $D$ and $C$, even if one cannot simply write $D=(H^*)^{-1}C H^*$, because $(H^*)^{-1}$ is an unbounded operator.
For the factorization \eqref{eq:sim1}, relation \eqref{eq:threefactors} applies with $A:=C$, $A^\prime=H^{*}D$, $X=L^2(0,1),X^\prime=\ell^2(\NN)$ and $Y=Y^\prime=L^2(0,1)$  as 
$$H^{*} \,D \prec_{\I,\B} C , $$ 
with the identity operator $I$ in $L^2(0,1)$ and
the connecting operator in $\B(X,X^\prime)$ is $S:=H^*$. In the sense of Definition~\ref{def:main}, the compact operator $H^{*}D$ is \emph{strictly more ill-posed} than the non-compact Ces\`{a}ro operator $C$, because non-compact operators can never be more ill-posed than compact operators. This is a consequence of the factorization in \eqref{eq:threefactors},
where a compact operator $A$ on the right-hand side always implies compactness of $A^\prime$.

For ill-posedness degree discussions of compositions for the Hausdorff operator, the Ces\`{a}ro  operator, and multiplication operators with the integration operator $J$ we refer to  \cite{Gerth21,HoMa22}, \cite{DFH24}, and \cite{HofWolf05,HofWolf09}, respectively.

\subsection{Non-compact multiplication operators mimicking compact operators} \label{subsec:mimic}
As outlined in \cite{WH24}, the Halmos spectral theorem can be helpful for measuring and comparing the ill-posedness of classes of injective, positive semi-definite, self-adjoint, and bounded linear operators with non-closed range by using orthogonal transformations leading 
to bounded linear multiplication operators $M_\lambda$ mapping in the real Hilbert space $L^2([0,\infty))$ with non-closed range defined as
\begin{equation} \label{eq:Multop}
[M\,x](\omega):=\lambda(\omega)\,x(\omega) \qquad (0 \le \omega < \infty, \quad x \in L^2([0,\infty))),
\end{equation}
for real multiplier functions $\lambda \in L^\infty([0,\infty))$. 
Here, for simplicity 
we consider  only the case that the spectral theorem leads to the measure space $(\Omega,\mu)=([0,\infty),\mu_{Leb})$ with the Lebesgue measure
leading to 
the Hilbert space $L^2([0,\infty)$ as basis space 
for $M$. 
Using the Lebesgue measure  seems to be reasonable, and we refer to discussions and examples of \cite{WH24} in this context. 


Using multiplication operators, the 
point of this section is the observation that, 
in contrast to the compact case, cf.~Theorem~\ref{th0}, the spectrum 
of, say, $A^*A$ alone 
does not contain enough information to conclude 
about equivalences with respect to the ordering 
$\leq_{\B,\B}$.  Specifically, 
we construct operators $A'$ and $A$, where 
$A$ is compact and $A'$ not, which have the same 
spectrum but are not equivalent with respect 
to $\leq_{\B,\B}$.


For that purpose, we consider the diagonalized injective self-adjoint compact operator $A^\prime:\ell^2(\NN) \to \ell^2(\NN)$ with non-increasingly ordered singular values
$\sigma_n^2 \;(n=1,2,\ldots)$ tending to zeros as $n \to \infty$, which will be introduced as
\begin{equation} \label{eq:compact}
[A^\prime z]_n:=\sigma_n^2\,z_n \quad (n=1,2,\ldots, \quad z=(z_1,z_2,\ldots)^T \in \ell^2(\NN)).
\end{equation}
Its mimicking multiplication operator counterpart is the operator
\begin{equation} \label{eq:mimicking}
[A x](\omega):=\sum \limits_{n=1}^\infty \sigma_n^2 \,\chi_{[n-1,n)}(\omega) x(\omega)\quad (0 \le \omega <\infty, \quad x \in L^2([0,\infty)),
\end{equation}
defined as a sum of characteristic functions over sets of Lebesgue measure one. 

The spectrum $\sigma(A)$ of the operator $A$ equals the essential range 
of the multiplier function $\lambda$ (see~\cite[Theorem~2.1(g)]{Haase17}), and this gives 
\[ \sigma(A) = 
\{0\} \cup \bigcup_{n=1}^\infty \{\sigma_n^2\}
= \sigma(A^\prime) .
\]
One could conjecture that then also $A$ is a compact operator, but this is not true, because all eigenvalues $\sigma_n^2$ of $A$
possess infinite-dimensional eigenspaces, and we refer to \cite[Prop.~2]{WH24} for the following proposition:
\begin{proposition} \label{pro:WH}
All injective, positive semi-definite self-adjoint bounded linear multiplication operators $M_\lambda$ of type \eqref{eq:Multop} are non-compact.
\end{proposition}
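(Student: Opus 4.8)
The plan is to show that a multiplication operator $M_\lambda$ as in \eqref{eq:Multop}, with $\lambda$ injective in the sense that it defines an injective operator (which forces $\lambda(\omega) \neq 0$ for $\mu$-a.e.\ $\omega$) and positive semi-definite, cannot be compact by exhibiting a bounded sequence that has no norm-convergent subsequence under $M_\lambda$. First I would recall the characterization of compactness: $M_\lambda$ is compact if and only if it maps the closed unit ball of $L^2([0,\infty))$ to a relatively compact set, equivalently (since $M_\lambda$ is self-adjoint and positive) if and only if its spectrum consists of $0$ together with at most a sequence of eigenvalues of finite multiplicity accumulating only at $0$. So the cleanest route is: produce an eigenvalue of infinite multiplicity, or more directly, produce an orthonormal sequence $(e_k)$ with $\|M_\lambda e_k\| \geq c > 0$ for all $k$, which rules out any convergent subsequence of $(M_\lambda e_k)$ and hence compactness.

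The key observation is that since $\lambda$ is not a.e.\ zero, there is some $\varepsilon > 0$ such that the set $E_\varepsilon := \{\omega : |\lambda(\omega)| \geq \varepsilon\}$ has positive Lebesgue measure; indeed $[0,\infty) = \bigcup_{m} \{|\lambda| \geq 1/m\}$ up to the null set where $\lambda = 0$, so some $\{|\lambda| \geq 1/m\}$ has positive measure. Now $E_\varepsilon$, being a subset of $[0,\infty)$ of positive Lebesgue measure, is \emph{non-atomic} and infinite-dimensional in the sense that $L^2(E_\varepsilon)$ is infinite-dimensional: one can split $E_\varepsilon$ into countably many pairwise disjoint subsets $E_\varepsilon = \bigsqcup_{k=1}^\infty F_k$, each of positive measure, using non-atomicity of the Lebesgue measure. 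Setting $e_k := |F_k|^{-1/2}\chi_{F_k}$ gives an orthonormal sequence in $L^2([0,\infty))$, and
\[
\|M_\lambda e_k\|^2 = \frac{1}{|F_k|}\int_{F_k} |\lambda(\omega)|^2 \, d\omega \geq \varepsilon^2,
\]
so $\|M_\lambda e_k\| \geq \varepsilon$ for every $k$. Since $(e_k)$ converges weakly to $0$ but $(M_\lambda e_k)$ does not converge to $0$ in norm, $M_\lambda$ is not compact.

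I expect the main obstacle to be a careful justification that injectivity of $M_\lambda$ indeed forces $\lambda \neq 0$ a.e.\ (so that $E_\varepsilon$ has positive measure for some $\varepsilon$): if $\lambda$ vanished on a set $N$ of positive measure, then any nonzero $x \in L^2$ supported in $N$ would be in $\N(M_\lambda)$, contradicting injectivity — so this is routine but should be stated. A secondary technical point is the decomposition of a positive-measure set into infinitely many positive-measure pieces; this is a standard consequence of the non-atomicity of Lebesgue measure (e.g.\ repeatedly halving the measure), and I would invoke it without belaboring the construction. Positive semi-definiteness and self-adjointness are not actually needed for the argument — only that $M_\lambda$ is injective and bounded — but since the proposition states them, I would simply note that the argument uses only a strictly weaker hypothesis. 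Alternatively, one may phrase the whole thing spectrally: each value $c$ in the essential range of $\lambda$ that is attained on a set of positive measure is an eigenvalue of infinite multiplicity when the level set $\{\lambda = c\}$ has positive measure, and in general $M_\lambda$ has no nonzero eigenvalue of finite multiplicity while $0 \notin \sigma_p(M_\lambda)$ by injectivity, so the spectrum cannot be of the Riesz–Schauder form required of a compact operator; I would present the orthonormal-sequence version as the primary proof since it is the most self-contained.
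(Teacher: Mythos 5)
Your argument is correct. Note that the paper itself gives no proof of Proposition~\ref{pro:WH}: it simply cites \cite[Prop.~2]{WH24}, and the only reasoning offered in the surrounding text is that, for the particular operator \eqref{eq:mimicking}, every eigenvalue $\sigma_n^2$ has an infinite-dimensional eigenspace, which is incompatible with the Riesz--Schauder spectral structure of a compact operator. Your primary argument --- use injectivity to find $E_\varepsilon=\{\omega: |\lambda(\omega)|\ge\varepsilon\}$ of positive measure, split it by non-atomicity of the Lebesgue measure into countably many disjoint pieces $F_k$ of positive measure, and exhibit an orthonormal sequence $e_k\rightharpoonup 0$ with $\|M_\lambda e_k\|\ge\varepsilon$ --- is self-contained and strictly more general than that heuristic: it needs no value of $\lambda$ to be attained on a set of positive measure (for a strictly monotone multiplier $M_\lambda$ has no eigenvalues at all, so the ``infinite-dimensional eigenspace'' picture is vacuous there), and, as you correctly observe, it uses neither self-adjointness nor positive semi-definiteness. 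One small point to tighten: the normalization $e_k=|F_k|^{-1/2}\chi_{F_k}$ requires $0<|F_k|<\infty$, and $E_\varepsilon$ may have infinite measure; replace $E_\varepsilon$ first by $E_\varepsilon\cap[0,N]$ for $N$ large enough that this set still has positive (hence finite) measure, and then split. With that adjustment the proof is complete, and it is in the spirit of what \cite{WH24} establishes, namely that every nonzero spectral projection of $M_\lambda$ has infinite-dimensional range --- which is exactly what your orthonormal sequence witnesses.
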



As a consequence 
\begin{equation} A \not \leq_{\B,\B} A^\prime.\end{equation}
It is easy to see that there is a bounded linear operator $S:\ell^2(\NN) \to L^2([0,\infty))$ defined as
\begin{equation} \label{eq:Soperator}
[Sz](\omega):= \sum \limits_{n=1}^\infty z_n \,\chi_{[n-1,n)}(\omega) \quad  (\omega \in [0,\infty), \quad z=(z_1,z_2,\ldots)^T \in \ell^2(\NN)),
\end{equation} 
such that with  $AS: \ell^2(\NN) \to L^2([0,\infty))$ 
\begin{equation} \label{eq:sim2}
 S\,A^\prime \,=\, A \, S \,.
\end{equation}
Again we observe  some kind of similarity between the operators $A^\prime$ and $A$, even if one cannot simply write $A^\prime=S^{-1}AS$. 
But the situation here is different from the situation in Section~\ref{subsec:Hausdorff}, because $\|Sz\|^2_{L^2([0,\infty))}=\|z\|_{\ell^2(\NN)}^2$ for all $z \in \ell^2(\NN)$ and $S$ has a closed range in $L^2([0,\infty))$ such that the Moore-Penrose pseudoinverse operator
$S^\dagger$ 
is bounded and $A^\prime=S^\dagger AS$ holds  true.
Hence, we have as main conclusion 
\[ A^\prime \leq_{\B,\B} A , \quad 
A^\prime \not \sim_{\B,\B} A  
\qquad \text{ but } \sigma(A^\prime) = \sigma(A).
\]
Thus, in the non-compact case a result analogous to Corollary~\ref{cor:iso} cannot hold.



\section{Banach space case} \label{sec:Banach}
In this part, we extend some results to  
the Banach space case.  Since orthogonality does not 
make sense there, we  only use the 
ordering $\leq_{\B,\B}$. 
We denote by a superscript $*$ the topological dual operator.

\begin{theorem}\label{th3}
Let $A:X \to Y$, $A':X'\to Y'$ be bounded operators between Banach spaces. 
Then 
\[  A' \leq_{\B,\B} A  \Longleftrightarrow   {A'}^* \leq_{\B,\B}  {A}^* .\] 
\end{theorem}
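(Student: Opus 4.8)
The plan is to prove the two implications separately, exploiting the symmetry of the statement (the direction $\Leftarrow$ follows from $\Rightarrow$ applied to $A^*,{A'}^*$ together with the fact that the natural embedding into the bidual is an isometric homomorphism, so it suffices to prove one direction carefully and then pass to the bidual, using $A^{**}|_X = A$ up to canonical identification). So the core work is: assuming $A' \leq_{\B,\B} A$, i.e.\ $A' = TAS$ with $S \in \B(X,X')$ and $T \in \B(\ol{\Ra(A)},Y')$, produce a factorization ${A'}^* = \widetilde{S}\, A^* \widetilde{T}$ with $\widetilde{T} \in \B(\ol{\Ra(A^*)}, X^*)$ and $\widetilde{S} \in \B(Y^*, (Y')^*)$ in the sense required by Definition~\ref{def:main}.

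First I would take duals in $A' = TAS$. The subtlety, and the main obstacle, is that $T$ is only defined on the subspace $\ol{\Ra(A)} \subseteq Y$, not on all of $Y$, so $T^*$ naturally lives on $(\ol{\Ra(A)})^*$, not on $Y^*$. I would handle this by first extending $T$ to a bounded operator $\widehat{T} : Y \to Y'$ --- this is not possible in general for arbitrary Banach spaces, so instead the cleaner route is: observe that $AS$ maps into $\ol{\Ra(A)}$, so the composition $T(AS)$ only ever sees $T$ on $\Ra(AS) \subseteq \ol{\Ra(A)}$; dualizing, ${A'}^* = (AS)^* T^* = S^* A^* \iota^{**}\! \circ \cdots$. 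Concretely, let $j : \ol{\Ra(A)} \hookrightarrow Y$ be the inclusion and write $A = j A_0$ where $A_0 : X \to \ol{\Ra(A)}$ has dense range. Then $A' = T A_0 S$ with now $T \in \B(\ol{\Ra(A)}, Y')$ acting on the \emph{whole} domain space of $A_0$, and dualizing gives ${A'}^* = S^* A_0^* T^*$ with $T^* \in \B((Y')^*, (\ol{\Ra(A)})^*)$ and $S^* \in \B((X')^*, X^*)$ playing the role of the outer connecting operator, while $A_0^* : (\ol{\Ra(A)})^* \to X^*$. It remains to identify $A_0^*$ with $A^*$ up to the bounded identifications coming from $j$: since $A_0$ has dense range, $j^* : Y^* \to (\ol{\Ra(A)})^*$ is a bounded surjection (quotient map) and $A_0^* j^* = A^*$, hence $\Ra(A^*) = \Ra(A_0^*)$ and in particular $\ol{\Ra(A^*)} = \ol{\Ra(A_0^*)}$.

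With that identification, I would assemble the factorization: set $\widetilde{S} := T^* : (Y')^* \to (\ol{\Ra(A)})^*$, but Definition~\ref{def:main} for ${A'}^* \leq_{\B,\B} A^*$ wants the inner factor restricted to $\ol{\Ra(A^*)} = \ol{\Ra(A_0^*)} \subseteq X^*$ --- so the matching is: the ``$A$'' of the definition is $A^*$, the right connecting operator $S \in \B((Y')^*, (Y')^*)$... here I need to be careful about which space is the domain. Rewriting: ${A'}^* : (Y')^* \to (X')^*$, so in Definition~\ref{def:main} with $A'\rightsquigarrow {A'}^*$ and $A \rightsquigarrow A^*$, I need ${A'}^* = \widetilde{T}\, A^*\, \widetilde{S}$ with $\widetilde{S} \in \B((Y')^*, Y^*)$ and $\widetilde{T} \in \B(\ol{\Ra(A^*)}, (X')^*)$. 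From ${A'}^* = S^* A_0^* T^*$ and $A^* = A_0^* j^*$, I would pick $\widetilde{S} := (j^*)^\dagger T^*$ if $j^*$ were invertible, which it is not in general; the honest fix is to instead \emph{not} split $A$ as $jA_0$ on the dual side but rather note that $A^* = A_0^* j^*$ means $\Ra(A^*) \subseteq \Ra(A_0^*)$ with the reverse by density, and choose $\widetilde{S}$ so that $A^* \widetilde{S} = A_0^* T^*$: since $j^* T^* : (Y')^* \to (\ol{\Ra(A)})^*$ and I want to hit the same thing through $A^*$, I can take $\widetilde S := $ any bounded lift, but boundedness of the lift is exactly the obstruction.

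The clean resolution --- and what I would actually write --- is to avoid lifting altogether by symmetry. I claim: it suffices to prove $A' \leq_{\B,\B} A \Rightarrow {A'}^* \leq_{\B,\B} A^*$ \emph{without} worrying about whether the inner factor of the dual factorization respects the $\ol{\Ra(A^*)}$ restriction, because Definition~\ref{def:main} only requires $T \in \B(\ol{\Ra(A)}, Y')$ --- a restriction on the domain of the \emph{outer} factor $T$, which maps out of $\ol{\Ra(A)}$ --- and when we dualize ${A'}^* = S^* A_0^* T^*$, the outer factor on the dual side is $S^* : (X')^* \to X^*$ composed appropriately, whose domain is all of $(X')^*$, not a subspace; meanwhile the factor that must have restricted domain $\ol{\Ra(A^*)}$ is the one playing the role of ``$T$'', and here $A_0^* T^*$ maps $(Y')^* \to X^*$ with range in $\ol{\Ra(A_0^*)} = \ol{\Ra(A^*)}$ automatically, and $S^*$ restricted there is bounded. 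So the factorization ${A'}^* = S^*\, A^* \, \widetilde S$ with $\widetilde S \in \B((Y')^*, Y^*)$ chosen as $\widetilde S := ?$ --- still need $A^* \widetilde S = A_0^* T^*$. I think the genuinely correct statement is that one should choose the extension of $T$ to $Y$ using the Hahn--Banach theorem componentwise is \emph{not} available, but $T^*$ is defined on $(Y')^*$ landing in $(\ol{\Ra(A)})^*$, and then the Hahn--Banach theorem \emph{does} let us extend each functional, i.e.\ there is a (nonlinear in general, but we can pick a bounded linear right inverse of the quotient map $j^*$ only if $\ker j^* = \Ra(A)^\perp$ is complemented). Hence the main obstacle is precisely this: making the domain-restriction bookkeeping in Definition~\ref{def:main} match up across duality, and I expect the paper handles it by showing the restriction conditions are automatically satisfied because ranges dualize to annihilators; I would therefore spend the bulk of the proof verifying that $\ol{\Ra(A_0^*)} = \ol{\Ra(A^*)}$ and that $S^*$ is bounded on this closed subspace, conclude ${A'}^* \leq_{\B,\B} A^*$, and then obtain the converse by applying this implication to $A^*, {A'}^*$ and using that the canonical map $X \to X^{**}$ is an isometric isomorphism onto a closed (but possibly proper) subspace together with $A^{**}\iota_X = \iota_Y A$, which suffices because $\leq_{\B,\B}$ is insensitive to composing with isometric inclusions on the appropriate side.
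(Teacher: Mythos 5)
The paper states Theorem~\ref{th3} without any proof, so there is nothing of the authors' to compare you against; your argument has to stand on its own, and it does not. You correctly isolate the one real difficulty: writing $A=jA_0$ with $j:\ol{\Ra(A)}\hookrightarrow Y$ the inclusion and $A_0$ having dense range, dualizing $A'=TA_0S$ gives ${A'}^*=S^*A_0^*T^*$, where $T^*$ lands in $(\ol{\Ra(A)})^*\cong Y^*/\Ra(A)^\perp$ rather than in $Y^*$; since $A_0^*$ is injective, producing the required inner connecting operator $\tilde S\in\B((Y')^*,Y^*)$ with $A^*\tilde S=A_0^*T^*$ is exactly the problem of lifting $T^*$ boundedly and linearly through the quotient map $j^*$. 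But you never construct $\tilde S$: after several abandoned attempts ($(j^*)^\dagger$, componentwise Hahn--Banach, a right inverse of $j^*$) the proposal ends with ``I expect the paper handles it by\dots''. That is a genuine gap, not bookkeeping: the lift exists precisely when $\iota_{Y'}\circ T$ extends to a bounded operator $Y\to (Y')^{**}$, which in general it does not.

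Moreover, the obstruction is fatal to the statement as written, so no rearrangement of your argument can close the gap without extra hypotheses. Take $Y=C[0,1]$, let $M\subset Y$ be a closed subspace isomorphic to $\ell^2$ (Banach--Mazur), and set $X=X'=Y'=M$, $A=j:M\hookrightarrow Y$, $A'=\mathrm{id}_M$. Then $A'\leq_{\B,\B}A$ with $T=S=\mathrm{id}_M$. However $ {A'}^*=\mathrm{id}_{M^*}$ and $A^*=j^*:Y^*\to M^*$ is the (surjective) restriction map, so a factorization $\mathrm{id}_{M^*}=\tilde T\,j^*\,\tilde S$ would make $\tilde S\tilde Tj^*$ a bounded idempotent on $Y^*$ with range $\tilde S(M^*)\cong\ell^2$, i.e.\ it would exhibit $\ell^2$ as a complemented subspace of $Y^*=\mathcal{M}[0,1]$, an $L^1(\mu)$-space; by Grothendieck's theorem the resulting projection onto $\ell^2$ would be absolutely summing, forcing $\mathrm{id}_{\ell^2}$ to be absolutely summing and contradicting Dvoretzky--Rogers. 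Hence ${A'}^*\not\leq_{\B,\B}A^*$. (The example uses well-posed operators, so an ill-posedness hypothesis might rescue the theorem, but the lifting problem you met is the heart of the matter either way.) Your reduction of ``$\Leftarrow$'' to ``$\Rightarrow$'' via biduals has an analogous hole: the connecting operators obtained at the bidual level need not map $\iota_{X'}(X')$ into $\iota_X(X)$, so one cannot descend from $A^{**}$ to $A$, and ``insensitivity to isometric inclusions'' does not repair this. A correct proof needs an added assumption such as $\ol{\Ra(A)}$ complemented in $Y$ (automatic in Hilbert space) or connecting operators defined on the full spaces together with reflexivity.
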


For compact operators mapping between Banach spaces, we can 
replace singular values by $s$-numbers:
\begin{definition}
 Let $A,A'$ be compact operators  as above. 
 Let $s$ be an $s$-number \cite{Pietsch}. 
 Define the ordering 
 \[ A' \leq _{s,C} A \Longleftrightarrow  \exists\, C>0 \;\mbox{such that} \,\; \forall n :  \,  s_n(A') \leq C s_n(A). \]
 \end{definition}

Directly from the axioms of $s$-numbers we have the  result 
\begin{lemma}
\begin{equation}\label{simply} 
A' \leq_{\B,\B} A  \Rightarrow A' \leq _{s,C} A.
\end{equation}
\end{lemma}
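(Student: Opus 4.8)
The plan is to prove the implication \eqref{simply} directly from the defining axioms of $s$-numbers, using the factorization supplied by the hypothesis $A' \leq_{\B,\B} A$. By Definition~\ref{def:main}, the assumption gives us bounded operators $T \in \B(\ol{\Ra(A)}, Y')$ and $S \in \B(X, X')$ with $A' = T A S$, so we are reduced to bounding $s_n(TAS)$ in terms of $s_n(A)$. (A minor bookkeeping point: $A$ here maps $X \to Y$, while the factorization uses $A$ restricted with codomain $\ol{\Ra(A)}$; since $s$-numbers of $A$ and of its corrange-restriction coincide, this causes no difficulty, and I would remark on it in one sentence.)

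The key steps are the following. First, recall the two relevant axioms from Pietsch's theory \cite{Pietsch}: the ideal property, which states that $s_n(R B W) \le \|R\| \, s_n(B) \, \|W\|$ for all bounded $R, W$ and all $n$; and the fact that $s_n$ is a well-defined nonnegative sequence. Second, apply the ideal property with $R = T$, $B = A$, $W = S$ to obtain
\[
s_n(A') = s_n(TAS) \le \|T\| \, s_n(A) \, \|S\| \qquad \forall n \in \NN.
\]
Third, set $C := \|T\| \, \|S\|$, which is a finite positive constant (positive because if either factor were zero then $A'$ would be zero and the conclusion trivial), and conclude $s_n(A') \le C \, s_n(A)$ for all $n$, which is exactly the statement $A' \leq_{s,C} A$.

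I do not anticipate a substantive obstacle here: the entire content is the ideal property (multiplicativity under composition with bounded operators), which is one of the defining axioms every $s$-number function must satisfy, so the proof is essentially a one-line invocation. The only thing requiring a sentence of care is the domain/codomain matching for the middle factor $A$, as noted above, together with the trivial case where one of the connecting operators vanishes. A complete write-up would read roughly: ``By assumption $A' = TAS$ with $T, S$ bounded. By the ideal property of $s$-numbers \cite{Pietsch}, $s_n(A') \le \|T\| s_n(A) \|S\|$ for all $n$; choosing $C = \|T\|\|S\| + 1$ (say) gives the claim.''
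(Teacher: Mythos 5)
Your proposal is correct and is exactly the argument the paper intends: the paper gives no written proof beyond the remark that the lemma follows ``directly from the axioms of $s$-numbers,'' and the axiom in question is precisely the ideal property $s_n(TAS)\le\|T\|\,s_n(A)\,\|S\|$ that you invoke. Your additional remarks on the codomain restriction of $A$ and the trivial case $T=0$ or $S=0$ are harmless bookkeeping that the paper omits.
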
 
It is unknown to the authors if and under what condition
a reverse implication could hold true.

The Douglas range inclusion theorem does not hold in Banach spaces without additional assumptions. 
The following results are due to \cite{Barnes,Embry}: 
\begin{theorem}\label{th_8}
Let $\,A: X \to Y$ and $A':X \to Y'$ be bounded linear operators mapping between the Banach spaces $X$, $Y$, and $Y'$. 
\begin{enumerate}
    \item\label{Baone} The following statements are equivalent:
 \begin{itemize} 
  \item \[ \Ra({A'}^*) \subset \Ra(A^*) . \]
  \item \[ A' = T A , \quad T \in \B(\ol{\Ra(A)},X) . \]
  \item  \[ \exists \, C >0:\quad   \|A'x \| \leq C \| A x\| \quad\forall \,x \in X. \]
 \end{itemize}
\item\label{Batwo} 
Let $Y' = Y$. Moreover, assume that  
\[ \Ra(A') \subset \Ra(A) \]
and  that $\N(A)$ has a closed complemented subspace $X = \N(A) \oplus W$.
Then
\[ A' = A S , \quad S \in \B(X,X) \,. \]
\end{enumerate}
\end{theorem}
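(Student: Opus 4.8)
The plan is to prove the two parts of Theorem~\ref{th_8} by reducing them to the corresponding Hilbert-space facts only where genuinely possible, and otherwise invoking the Banach-space duality machinery directly. For part~\ref{Baone}, the equivalence of the norm estimate $\|A'x\|\le C\|Ax\|$ with the factorization $A'=TA$ is essentially elementary: if the estimate holds, then $A x \mapsto A' x$ is a well-defined bounded linear map on $\Ra(A)$ (well-definedness because $Ax=0\Rightarrow A'x=0$, boundedness with constant $C$), which extends by continuity to $T\in\B(\ol{\Ra(A)},Y')$, and then $A'=TA$ on all of $X$; conversely $A'=TA$ gives the estimate with $C=\|T\|$. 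The remaining equivalence with $\Ra({A'}^*)\subset\Ra(A^*)$ is the genuinely Banach-space content and is precisely the theorem of Barnes (a ``pre-adjoint'' form of Douglas): one direction follows by taking adjoints of $A'=TA$ to get ${A'}^*=A^*T^*$, so $\Ra({A'}^*)\subset\Ra(A^*)$; the reverse is the nontrivial direction and I would simply cite \cite{Barnes,Embry} for it, since reproving it would require the closed-graph argument adapted to the bidual. So for part~\ref{Baone} the only ``work'' is spelling out the first, easy equivalence and citing the literature for the second.

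For part~\ref{Batwo}, the hypothesis is the true range inclusion $\Ra(A')\subset\Ra(A)$ together with a closed complemented complement $X=\N(A)\oplus W$. The strategy is: let $P\in\B(X,X)$ be the (bounded, by closedness of the complemented decomposition) projection onto $W$ along $\N(A)$. Then $A$ restricted to $W$, call it $A_W:W\to\Ra(A)$, is a continuous bijection of $W$ onto $\Ra(A)$, but $\Ra(A)$ need not be closed, so $A_W^{-1}:\Ra(A)\to W$ is only an everywhere-defined (on $\Ra(A)$) linear map, not a priori bounded. Now for each $x\in X$ we have $A'x\in\Ra(A)$ by hypothesis, so $A_W^{-1}A'x\in W\subset X$ is well-defined; set $S:=A_W^{-1}A'$ as a linear map $X\to X$. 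One checks $AS x = A A_W^{-1} A' x = A' x$ since $A_W^{-1}A'x\in W$ and $A$ agrees with $A_W$ on $W$. It remains to show $S$ is bounded, and this is the crux: I would do it with a closed-graph argument. Suppose $x_n\to 0$ in $X$ and $Sx_n\to w$ in $X$; since $Sx_n\in W$ and $W$ is closed, $w\in W$; applying the bounded operator $A$ gives $ASx_n\to Aw$, i.e. $A'x_n\to Aw$, but also $A'x_n\to 0$ by continuity of $A'$, so $Aw=0$, hence $w\in\N(A)\cap W=\{0\}$, so $w=0$. Thus $S$ has closed graph and, both $X$'s being Banach spaces, $S$ is bounded.

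The main obstacle is exactly this boundedness step for $S$ in part~\ref{Batwo}: without the complemented-complement hypothesis one has no bounded projection $P$ and no way to single out $W$, and without knowing $\Ra(A)$ is closed one cannot invoke the bounded inverse theorem for $A_W$ directly — the closed-graph argument above is what lets one sidestep the non-closedness of $\Ra(A)$ by pulling everything back through the genuinely bounded operators $A$ and $A'$. I expect no difficulty in the algebraic identity $ASx=A'x$; the only subtlety is being careful that $Sx$ really lands in $W$ (so that $A$ and $A_W$ coincide on it), which is built into the definition $S=A_W^{-1}A'$. For part~\ref{Baone} I anticipate essentially no obstacle beyond correctly attributing the hard direction to \cite{Barnes,Embry}. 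It is worth remarking in passing why part~\ref{Batwo} is not simply symmetric to part~\ref{Baone}: the adjoint-range inclusion in \ref{Baone} controls $A'$ uniformly by $A$ via a norm estimate, whereas a bare range inclusion $\Ra(A')\subset\Ra(A)$ carries no such uniformity in a general Banach space, which is precisely why the extra structural hypothesis on $\N(A)$ is needed.
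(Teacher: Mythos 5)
Your argument is correct, but note that the paper itself gives no proof of Theorem~\ref{th_8} at all: it attributes both parts wholesale to \cite{Barnes,Embry}. What you supply is therefore a genuine addition rather than a variant of the paper's route. For part~\ref{Baone} you prove the elementary equivalence between the norm estimate and the factorization $A'=TA$ (your $T$ correctly lands in $\B(\ol{\Ra(A)},Y')$, which also silently fixes the typo $\B(\ol{\Ra(A)},X)$ in the statement), derive the adjoint range inclusion from the factorization, and defer only the one genuinely hard implication --- that $\Ra({A'}^*)\subset\Ra(A^*)$ forces $\|A'x\|\le C\|Ax\|$ --- to Barnes; that division of labour is exactly right, since that implication is the Banach-space-specific content. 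For part~\ref{Batwo} your construction $S=A_W^{-1}A'$ with the closed-graph argument is complete and sound: linearity of $S$ is clear, $ASx=A'x$ holds because $Sx\in W$ where $A$ agrees with $A_W$, and the graph is closed at $0$ (which suffices by linearity) because $W$ is closed, $A$ is continuous, and $\N(A)\cap W=\{0\}$; this is essentially Embry's original argument. The only cosmetic remark is that the bounded projection $P$ you introduce is never actually used --- closedness of $W$ and triviality of $\N(A)\cap W$ are all the decomposition contributes --- and your closing observation about why \ref{Batwo} needs the complementation hypothesis while \ref{Baone} does not is accurate and worth keeping.
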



As a consequence we have the following implication:
\begin{theorem}
Let $A,A':X\to Y$ and $A$ be injective (or have finite-dimensional 
nullspace). 
Then 
\[  A' \leq_{\B,\B} A   \Longleftrightarrow 
\exists \, T \in \B:   {A'}^* T^* \leq_{norm} A^* .\]
 \end{theorem}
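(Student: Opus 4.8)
The plan is to reduce the claimed equivalence to the two building blocks already available: Definition~\ref{def:main} of $\leq_{\B,\B}$ together with the adjoint-invariance of this ordering (Theorem~\ref{th3}), and the Banach-space version of Douglas' theorem (Theorem~\ref{th_8}, item~\ref{Baone}). The key observation is that the right-hand side, $\exists\,T\in\B:\ {A'}^*T^*\leq_{\mathrm{norm}} A^*$, is exactly the norm-inequality characterisation in Theorem~\ref{th_8}\eqref{Baone} applied to the pair of operators $(A')^*T^*$ and $A^*$, which in turn is equivalent to the range inclusion $\Ra\big((A'T)^{**}\big)=\Ra\big((A')^*T^*)^*\big)\subset\Ra(A^{**})$ and to a factorisation $(A')^*T^* = A^* R$ for some bounded $R$; after taking one more adjoint this will read $A'T = R^* A$ up to the usual reflexivity caveats, which I will handle by staying on the level of the norm inequality rather than iterating adjoints.

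First I would spell out the forward direction. Assume $A'\leq_{\B,\B} A$, so $A' = \widetilde T A S$ with $\widetilde T\in\B(\ol{\Ra(A)},Y)$ and $S\in\B(X,X)$. Then for every $x\in X$ one has $\|A'x\|\le \|\widetilde T\|\,\|AS x\|$; but this does not immediately give the desired statement because $S$ acts on the \emph{domain} side, whereas the right-hand side of the claim compares $(A')^*T^*$ with $A^*$, i.e.\ it controls things on the domain side via a factor on the \emph{image} side of $A'$. This is where the injectivity (or finite-dimensional nullspace) hypothesis on $A$ enters, exactly as in Theorem~\ref{th00}: by Theorem~\ref{th3} we have ${A'}^*\leq_{\B,\B} A^*$, i.e.\ ${A'}^* = P A^* Q$ for bounded $P,Q$, and injectivity of $A$ makes $A^*$ have dense range, so $Q$ can be absorbed. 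Concretely I would argue: from $A' = \widetilde T A S$ take adjoints to get $(A')^* = S^* A^* \widetilde T^*$; set $T := \widetilde T^{**}$ (on a reflexive piece) or, cleaner, observe that $\widetilde T^*:Y^*\to \ol{\Ra(A)}^*$ and that $(A')^*T^* = S^* A^* \widetilde T^* T^*$, and choose $T$ so that $\widetilde T^* T^* = I$ on the relevant subspace — possible precisely because $\ol{\Ra(A)}$ is large enough when $\N(A)$ is small. Then $\|(A')^* T^* y\| = \|S^* A^* y\|\le \|S^*\|\,\|A^* y\|$, giving $(A')^*T^*\leq_{\mathrm{norm}} A^*$.

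For the converse, assume $\exists\, T\in\B$ with ${A'}^*T^*\leq_{\mathrm{norm}} A^*$, i.e.\ $\|(A')^*T^* y\|\le C\|A^* y\|$ for all $y$. By Theorem~\ref{th_8}\eqref{Baone} (with the roles played by the operators $(A')^*T^*$ and $A^*$, both having domain $Y^*$ or the appropriate dual), this is equivalent to a bounded factorisation $(A')^*T^* = A^* R$ for some $R\in\B$, equivalently to the range inclusion $\Ra\big((A'T^*{}^*)\big)\subset\Ra(A)$ modulo biduals — and then a further application of Theorem~\ref{th_8}, now item~\ref{Batwo} using that $\N(A)$ is complemented (which holds automatically when it is finite-dimensional, and trivially when $A$ is injective), yields $A' T^{**} = A \widetilde S$ for bounded $\widetilde S$, hence $A' = A\widetilde S(T^{**})^{-1}$ on the relevant subspace or, more robustly, $A' = (A\widetilde S)(\text{bounded})$, which is a $\leq_{\B,\B}$ factorisation of the form $A' = I\cdot A\cdot S$. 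I expect the \textbf{main obstacle} to be the bookkeeping of adjoints and biduals: $\leq_{\B,\B}$ and the norm ordering live on opposite sides under duality, and in a non-reflexive Banach space one cannot freely pass from $A^{**}$ back to $A$. The clean way around this is to never iterate the adjoint more than Theorem~\ref{th3} allows in one step, and to invoke Theorem~\ref{th_8}\eqref{Baone} in the ``$\Ra({A'}^*)\subset\Ra(A^*)$ iff norm inequality iff $A'=TA$'' form, which is stated for the operators themselves (not their biduals); the injectivity/finite-nullspace hypothesis on $A$ is precisely what lets the image-side connecting operator be promoted from partial isometry/bounded to something invertible on the range, closing the loop.
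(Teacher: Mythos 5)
Your forward direction contains the decisive gap. From $A' = \widetilde T A S$ you correctly obtain ${A'}^* = S^* A^* \widetilde T^*$, but you then propose to ``choose $T$ so that $\widetilde T^* T^* = I$ on the relevant subspace'', justified by the smallness of $\N(A)$. This cannot work: the left connecting operator $\widetilde T$ in a $\leq_{\B,\B}$ factorization is an arbitrary bounded operator --- nothing in Definition~\ref{def:main} forces it to be injective, let alone bounded below --- so $\widetilde T^*$ need not admit any bounded right inverse, and the injectivity hypothesis concerns $A$, not $\widetilde T$. You are driven to this impossible step by reading the right-hand side literally as $\|{A'}^* T^* y\| \leq \|A^* y\|$; but that condition is satisfied by $T=0$ for every pair $A,A'$, so under the literal reading the stated equivalence could not hold, and the paper's one-line argument (``follows easily from the boundedness of $S^*$'') only makes sense for the reading $\|{A'}^* y\| \leq \|S^*\|\,\|A^* T^* y\|$, i.e.\ ${A'}^* \leq_{\text{norm},C} A^* T^* = (TA)^*$ with $T := \widetilde T$ --- the Banach analogue of the Hilbert-space line $A' \leq_{\B,\B} A \LRA \exists Q\in\O:\ A' \leq_{\text{norm},C} AQ$ from Section~\ref{sec:Douglas}. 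Under that reading the forward direction is immediate and requires no inversion of anything.

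The converse likewise accumulates debts that are not actually payable. You pass to the bidual range inclusion $\Ra\bigl((A'T)^{**}\bigr) \subset \Ra(A^{**})$, invoke Theorem~\ref{th_8}, item~\ref{Batwo}, with a complemented nullspace, and conclude with ``$A' = A\widetilde S (T^{**})^{-1}$'', which once more inverts a connecting operator that need not be invertible; moreover the ``modulo biduals'' caveats are not cosmetic, since in a non-reflexive space a bidual range inclusion does not descend to a primal one, and from a factorization of $TA'$ through $A$ one cannot recover one of $A'$ without a left inverse of $T$. The paper's route avoids all of this: the norm inequality ${A'}^* \leq_{\text{norm},C} A^*T^*$ compares two operators with the common domain $Y^*$, so Theorem~\ref{th_8}, item~\ref{Baone}, applies directly on the dual side and yields a bounded factorization ${A'}^* = R\,A^*T^*$, i.e.\ ${A'}^* \leq_{\B,\B} A^*$, whence $A' \leq_{\B,\B} A$ by Theorem~\ref{th3}; no biduals, no item~\ref{Batwo}, and no complementation argument are needed. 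You identified the right tools (Theorem~\ref{th_8} and Theorem~\ref{th3}), but as written both directions of your argument have genuine gaps.
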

\begin{proof}
The implication ``$\Rightarrow$" follows easily from the 
boundedness of $S^*$. For the opposite direction 
``$\Leftarrow$", use Theorem~\ref{th_8}, item~\ref{Baone}. with 
$A' := A^*T^*$.
\end{proof}

\section{Nonlinear case studies}\label{sec:nonlinear}
\subsection{General assertions} \label{subsec:generalnl}
In this section, we discuss and extend the concept of ordering by ill-posedness for the case of nonlinear 
operator equations
\begin{equation}\label{eq:nlopeq} 
 F(x) = y\,,
\end{equation} 
modeling nonlinear ill-posed problems, where the \emph{nonlinear} operator 
$F$ with\linebreak \mbox{$F:D(F) \subset X \to Y$} is \emph{weakly sequentially closed}
with \emph{non-compact, convex and closed domain} $D(F)$, and $X,Y$ are Hilbert spaces.
Unless stated otherwise, we also assume  that $F$ has a Fr\'echet derivative as well. 

We usually assume that $F$ is {\em locally ill-posed} at a point $\xd \in D(F)$ in the sense that there exist a closed ball
$B_\rho(\xd)$ around $\xd$ with radius $\rho>0$ and a sequence $x_{n} \in D(F) \cap B_\rho(\xd)$ with  
\begin{equation} \label{eq:locill}
\lim_{n \to \infty}\|F(x_n)-F(\xd)\|_Y =0\,,   \quad \mbox{but}  \quad  \|x_n-\xd\|_X \not\to 0 \quad \mbox{as} \quad n \to \infty.
\end{equation}

For characterizing the strength and nature of local ill-posedness at $\xd \in D(F)$, ideas have been outlined in \cite{GorHof94,Hof94,Hof98,HoSch} to use the ill-posedness nature of linearizations to $F$ at $\xd$, and in particular the decay rate of singular values of the compact Fr\'echet derivative $\Fprim{\xd}$. This leads to the following definition:


\begin{definition}[local degree of ill-posedness]\label{def:locdeg}
Let the \emph{local degree of ill-posedness} of the nonlinear operator $F:D(F) \subset X \to Y$ at the point $\xd \in D(F)$ 
be the degree of ill-posedness of the compact Fr\'echet derivative
$\Fprim{\xd}$ at this point. Then moderate and severe ill-posedness of $\Fprim{\xd}$ and the associate interval of ill-posedness 
(see \eqref{intill}) transfer to the local nature of ill-posedness of the nonlinear operator $F$ at $\xd$.
\end{definition}

The generic examples of locally ill-posed operators are the completely continuous (compact) ones: 
\begin{proposition} \label{pro:sufloc}
If $F:D(F) \subset X \to Y$ is completely continuous, then the nonlinear equation \eqref{eq:nlopeq} is locally ill-posed everywhere on the interior ${\rm int}(D(F))$ of the domain $D(F)$.
\end{proposition}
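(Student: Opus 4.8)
The plan is to show that complete continuity of $F$ forces the local ill-posedness condition \eqref{eq:locill} at every interior point $\xd \in {\rm int}(D(F))$. The key structural fact I would exploit is that a completely continuous operator maps weakly convergent sequences to norm convergent ones, while in an infinite-dimensional Hilbert space the closed unit ball is never norm-compact but is weakly compact. So I would produce a sequence that converges weakly but not in norm to $\xd$, lying inside $D(F)$ near $\xd$, and then complete continuity will automatically give $\|F(x_n) - F(\xd)\|_Y \to 0$.

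First I would fix $\xd \in {\rm int}(D(F))$ and choose $\rho > 0$ small enough that the closed ball $B_\rho(\xd)$ is contained in $D(F)$; this is possible precisely because $\xd$ is an interior point. Since $X$ is an infinite-dimensional Hilbert space, I would pick an orthonormal system $\{e_n\}_{n \in \NN} \subset X$ and set $x_n := \xd + \rho e_n$. Then $x_n \in B_\rho(\xd) \subset D(F)$ for all $n$, and $\|x_n - \xd\|_X = \rho$ for every $n$, so certainly $\|x_n - \xd\|_X \not\to 0$. On the other hand, $e_n \rightharpoonup 0$ weakly (Bessel's inequality implies $(e_n, u) \to 0$ for every $u \in X$), hence $x_n \rightharpoonup \xd$ weakly.

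Next I would invoke complete continuity of $F$: a completely continuous operator sends the weakly convergent sequence $x_n \rightharpoonup \xd$ (all terms in $D(F)$) to a norm-convergent sequence $F(x_n) \to F(\xd)$ in $Y$. This gives $\lim_{n\to\infty}\|F(x_n) - F(\xd)\|_Y = 0$, so $(x_n)$ witnesses \eqref{eq:locill} and $F$ is locally ill-posed at $\xd$. Since $\xd$ was an arbitrary interior point, the claim follows.

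**The main obstacle** — really the only subtlety — is making sure the sequence stays in the domain and that $D(F)$ being infinite-dimensional (which the running assumptions on $F$ guarantee, since $D(F)$ is stated to be non-compact, convex and closed in the Hilbert space $X$) genuinely allows an orthonormal system inside a small ball around $\xd$; interiority of $\xd$ handles this. One should also note that the weak sequential closedness of $F$ is not even needed here — complete continuity is strictly stronger and does all the work — but if one preferred to argue via $\Fprim{\xd}$ one could instead note that complete continuity of $F$ forces $\Fprim{\xd}$ to be compact, hence (being between infinite-dimensional spaces) non-invertible with unbounded inverse, and then transfer this to $F$ locally; the direct argument above is cleaner, so I would present that.
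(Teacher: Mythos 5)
Your argument is correct and coincides with the paper's own proof: both fix an interior point $\xd$, take $x_n = \xd + \rho e_n$ for an orthonormal system $\{e_n\}$ so that $x_n \rightharpoonup \xd$ while $\|x_n - \xd\|_X = \rho$, and invoke complete continuity to get $\|F(x_n)-F(\xd)\|_Y \to 0$. Your additional remarks (that weak sequential closedness is not needed, and that infinite-dimensionality of $X$ is what supplies the orthonormal system) are accurate and only make explicit what the paper leaves implicit.
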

\begin{proof}
Choose $\xd \in {\rm int}(D(F))$ arbitrarily. Then, for a sufficiently small radius $\rho>0$, we have that $B_\rho(\xd) \subset D(F)$, and for any orthonormal system $\{e_n\}_{n=1}^\infty$ in $X$ we have $e_n \rightharpoonup 0$ and $\xd+\rho e_n \rightharpoonup \xd$ as $n \to \infty$ with $x_n:=\xd+\rho e_n \in B_\rho(\xd)$ and $\|x_n-\xd\|_X=\rho>0$.
Then the assume weak continuity yields the assertion of the proposition.
\end{proof}

Moreover, for completely continuous (compact) operators, the Fr\'echet derivative is compact as well, hence the linearization is 
also ill-posed. As the Example~\ref{ex:autocon} below will show, the converse assertion is not true. There exist non-compact nonlinear operators $F$ such that the Fr\'echet derivatives $\Fprim{\cdot}$ are compact linear operators. This illustrates 
the problem that even for differentiable operators and with regard to ill-posedness, 
the nonlinear operator can behave completely differently than its 
 linearization, and in particular Definition~\ref{def:locdeg} might become useless in certain situations.  

Another example illustrating this  appeared in \cite[A.~1]{EKN89}, where a nonlinear operator is everywhere 
ill-posed, but its linearization is well-posed on a dense set! This shows that without additional conditions, 
linear concepts of ill-posedness, as we used in the previous sections, are not appropriate in the nonlinear case. 

The additional conditions that we need here is that all  linearizations locally behave in a similar way: 

\begin{definition}
Let $F$ be a Fr\'echet-differentiable operator that is locally ill-posed at $\xd$. 
We say that it is \emph{stably ill-posed} if for all $x \in \B_\rho(\xd)$ 
\[  F'(x) \sim_{\B,\B} F'(\xd) .\] 
\end{definition}

As a consequence of Theorem~\ref{th0}, a necessary and sufficient condition for stable ill-posedness is that 
there exists constants $\underline c$, $\overline c$ such that 
\begin{equation} \label{eq:asymp}
\underline c \, \sigma_n (\Fprim{x}) \le \sigma_n (\Fprim{\xd}) \le \overline c\, \sigma_n (\Fprim{x}) \qquad 
\forall x \in B_\rho (\xd)\cap D(F)\, .
\end{equation}
Conditions sufficient for \eqref{eq:asymp} are given by certain nonlinearity conditions. Assume that 
for a linear bounded operator $\Rop{x,\tilde x} \in \B(Y,Y)$ depending on $x,\tilde x \in B_\rho(\xd)$ and for some exponent  $0<\kappa \le 1$ with the constant $C_R>0$, it holds that 
\begin{alignat}{2}
&\Fprim{x}=\Rop{x,\tilde x}\,\Fprim{\tilde x}, & \quad & \forall x,\tilde x \in B_\rho(\xd), \label{eq:rotleft} \\ 
 &\|\Rop{x,\tilde x}-I\|_{\B(Y,Y)} \le C_R \,\|x-\tilde x\|_X^\kappa, 
 \label{eq:rotleftnorm} 
 & & 
\end{alignat}
and we refer to \cite{Kaltenbacher08} for consequences and to \cite{Hanke95} for applications.
A similar slightly more general condition
was also used in \cite{Blasch}. 

In the notation of the previous sections, 
condition \eqref{eq:rotleft} means that 
\[ \Fprim{x} \sim_{\B,I} \Fprim{\tilde x}, \qquad\forall x,\tilde x \in B_\rho(\xd)\,,\]
where the operator $\Rop{x,\tilde x}$ constitutes the connecting operator in $\B$. 
  
A direct consequence of previous results (see Table~\ref{tab1}) is the following:   
\begin{proposition} \label{pro:sufasymp}
Condition \eqref{eq:rotleft} implies stable ill-posedness. Moreover, we have 
\[  \eqref{eq:rotleft}  \quad \Longleftrightarrow  \quad \left( \Fprim{x} \sim_{\text{norm},C} \Fprim{\tilde x}, \quad 
\forall x,\tilde x \in B_\rho(\xd) \right) \quad 
\Rightarrow   \quad 
 \eqref{eq:asymp}\, .
\]
\end{proposition}
  
We are now in the position to state different concepts of ordering in the nonlinear case.

 \begin{definition}\label{def8}
 Let Let $F,G$ be nonlinear Fr\'echet-differentiable operators and let $B_\rho(\xd)$ be a ball around $\xd$ with radius $\rho>0$.
Denote by $\NB$ the set of continuous possibly nonlinear operators
defined on a ball $B_\rho$
 (with possibly additional properties stated when required.)
 
Define
 \[ F \leq_{\NB,\NB} G \ \Longleftrightarrow   \
 \begin{array}{l} \Psi \in \NB, \Phi \in \NB:
 \  \text{with } \\[1mm]
 F(x) = \Psi\circ  G \circ \Phi(x) \end{array} \quad \forall x \in B_\rho(x_0).\]

Moreover, for $\xd$ fixed and $F,G$ defined on $\B_\rho(\xd)$,
we define the linearized ordering as 
 \[ F \leq_{\B,\B}^{\Lin} 
 G \Longleftrightarrow  \Fprim{\xd}\leq_{\B,\B} \Gprim{\xd}  \] 
 and the uniform linearized ordering as 
  \[ F \leq _{\B,\B}^{\ULin} G \Longleftrightarrow  \Fprim{x} \leq_{\B,\B}  \Gprim{x} \qquad \forall x \in B_{\rho}(\xd) . \] 
 \end{definition}
Here the nonlinear ordering $\leq_{\NB,\NB}$ reflects the 
idea that the information transfer in $F$  from input to 
output is passed though $G$ and hence any ``information" lost in 
$G$ will also be lost in $F$. Thus, $F$ is considered more ill-posed. 

Clearly, by the chain rule we have 
\[ F \leq _{\NB,\NB} G  
\Rightarrow F \leq _{\B,\B}^{\ULin} G   \Rightarrow  F \leq _{\B,\B}^{\Lin}  G. \]
Moreover if $F$ and $G$ are stably ill-posed, then 
\[ F \leq _{\B,\B}^{\Lin}  \Rightarrow F \leq _{\B,\B}^{\ULin} G.     \]
As the nonlinear ordering is difficult to prove,  the 
linearized orderings serve as practical computable conditions 
to verify it. Thus, it is of interest to consider conditions 
when the linearized orderings imply the nonlinear one. 

Below we further state such conditions. For this we have to take into account the degree of nonlinearity:

\subsection{Degree of nonlinearity}
The nonlinearity conditions \eqref{eq:asymp} 
or \eqref{eq:rotleft}  constitute a class of restrictions on the nonlinearity. 
Yet another (weaker) class of conditions are the tangential cone conditions. In \cite{HoSch} a parametric class of
such conditions were postulated in  \cite[Definition~1]{HoSch} by introducing the concept of a \emph{local degree of nonlinearity} at $\xd$ with the exponent triple  $(\gamma_1,\gamma_2,\gamma_3) \in [0,1]\times [0,1] \times [0,2]$ as follows:
\begin{definition}[local degree of nonlinearity]
We call the operator $F:D(F) \subset X \to Y$ \emph{locally nonlinear at $\xd \in D(F)$ of degree}  $(\gamma_1,\gamma_2,\gamma_3) \in [0,1]\times [0,1] \times [0,2]$ if there is a radius $\rho>0$ and a constant $q>0$ such that the estimate
\begin{equation}\label{eq:degnonli}
\begin{aligned}\|F(x)& - F(\xd) - \Fprim{\xd} (x-\xd) \|_Y  \\ 
&\leq q \; \| \Fprim{\xd} (x-\xd) \|_Y^{\gamma_1} \;\| F(x) - F(\xd) \|^{\gamma_2}_Y \; \|x-\xd\|_X^{\gamma_3} 
 \end{aligned}\qquad \forall x \in B_\rho(\xd)
 \end{equation}
holds true.
\end{definition}

The inequality in \eqref{eq:degnonli} is not scaling invariant in $F$ (i.e., invariant when $F$ is replaced by $\lambda F$ 
for any $\lambda \in \mathbb{R}$). This is, however, achieved when we restrict our considerations to the degree triple
$(1-\sig,\sig,0)$ for $\sig \in [0,1]$ with constants  $q=q_\sig>0$ (subsuming  the $\gamma_3$-part  into the constants)
such that
\begin{equation}\label{tcc3}
\begin{aligned}
\|F(x)& - F(\xd) - \Fprim{\xd} (x-\xd) \|_Y & \\ 
&\leq q_\sig  \;\| \Fprim{\xd} (x-\xd) \|_Y^{1-\sig}\; \| F(x) - F(\xd) \|^{\sig}_Y 
 \end{aligned}\qquad \forall x \in B_\rho(\xd). 
 \end{equation}
Under the exceptional requirements that the constants $q_\sig$ must be less than one for $\sig=0$ and $\sig=1$, the conditions in \eqref{tcc3} are basically equivalent for all  $\sig \in [0,1]$ in the sense that an inequality chain 
$$ \underline K\,\| \Fprim{\xd} (x-\xd) \|_Y \le  \| F(x) - F(\xd) \|_Y  \le \overline K\,\| \Fprim{\xd} (x-\xd) \|_Y\qquad \forall x \in B_\rho(\xd) $$
is valid for all such exponents $\sig$ with existing constants $0<\underline K \le \overline K < \infty$. Vice versa such chain
implies \eqref{tcc3} for all $\sig \in [0,1]$ and appropriate constants $q_\sig>0$ as the following Lemma~\ref{lemxx} shows. We refer also to \cite{HoSch}, where the cases $\sig = 0$ and $\sig=1$ had been discussed in this context.

We mention that the case $\sig = 1$ in \eqref{tcc3} corresponds to the well-known strong tangential cone condition of Hanke, Neubauer, and Scherzer, 
introduced in \cite{Hanke95}. This condition plays a crucial role in the analysis of nonlinear iterative methods. 
Note also that for this purpose the constant $q_1$ has to be assumed small, say $q_1<1$.

\begin{lemma}\label{lemxx}
\ \\[-2ex]
\begin{enumerate}
\item Let  \eqref{tcc3} hold for some $\sig \in [0,1]$,  where in case $\sig = 1$, we have to assume that $q_1 <1$. 
 Then there exists a constant $\overline K>0$ such that 
 \begin{equation}\label{upper} \|F(x) - F(\xd)\|_Y \leq \overline K \,\| \Fprim{\xd} (x-\xd)\|_Y \qquad \forall x \in B_\rho(\xd).  \end{equation}

\item   Let  \eqref{tcc3} hold for some $\sig \in [0,1]$, where in case $\sig = 0$ we have to assume that $q_0 <1$.
Then there also exists a constant $\underline K>0$ such that 
\begin{equation}\label{lower} 
\underline K \,\| \Fprim{\xd} (x-\xd) \|_Y  \leq \|F(x) - F(\xd)\|_Y   \qquad \forall x \in B_\rho(\xd).  \end{equation}
\end{enumerate} 
Conversely, the conditions \eqref{upper} and  \eqref{lower} together imply 
\eqref{tcc3} for any $\sig\in[0,1]$ 
and associated constants $0<q_\sig<\infty$
(although for no $\sig$ necessarily connected with a smallness conditions $q_\sig<1$).  
\end{lemma}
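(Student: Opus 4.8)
\emph{Proof plan.} The plan is to reduce all three claims to elementary estimates between the three nonnegative numbers
\[
 a := \|\Fprim{\xd}(x-\xd)\|_Y,\qquad b:=\|F(x)-F(\xd)\|_Y,\qquad d:=\|F(x)-F(\xd)-\Fprim{\xd}(x-\xd)\|_Y,
\]
which, for each fixed $x\in B_\rho(\xd)$, satisfy $|a-b|\le d\le a+b$ by the triangle inequality. Condition \eqref{tcc3} is exactly $d\le q_\sig\,a^{1-\sig}b^{\sig}$, so from $|a-b|\le d$ it yields the two pointwise one-sided bounds $b\le a+q_\sig a^{1-\sig}b^{\sig}$ and $a\le b+q_\sig a^{1-\sig}b^{\sig}$ on $B_\rho(\xd)$, with $q_\sig$ a constant independent of $x$.

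For item~1, start from $b\le a+q_\sig a^{1-\sig}b^{\sig}$. If $a=0$, this forces $b=0$ (in the borderline case $\sig=1$ one uses $q_1<1$ to conclude $b\le q_1 b$, hence $b=0$), so \eqref{upper} holds trivially. If $a>0$, divide by $a$ and set $t:=b/a\ge 0$, obtaining $t\le 1+q_\sig t^{\sig}$. For $\sig=1$ this gives $t\le (1-q_1)^{-1}$, which is the one place the smallness of $q_1$ is used; for $\sig\in[0,1)$ the map $s\mapsto s-q_\sig s^{\sig}$ is continuous and tends to $+\infty$ as $s\to\infty$ (since $s^{\sig}=o(s)$), so its sublevel set $\{s\ge 0 : s-q_\sig s^{\sig}\le 1\}$ is bounded and any of its upper bounds serves as $\overline K$, no smallness of $q_\sig$ being needed. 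Item~2 is symmetric, starting from $a\le b+q_\sig a^{1-\sig}b^{\sig}$: for $\sig=0$ one reads off $(1-q_0)a\le b$ directly, whence the hypothesis $q_0<1$ and $\underline K=1-q_0$; for $\sig\in(0,1]$ the degenerate cases $a=0$ or $b=0$ are trivial, and for $a,b>0$ dividing by $a$ gives $1\le t+q_\sig t^{\sig}$ with $t:=b/a$, so since $h(t):=t+q_\sig t^{\sig}$ is strictly increasing with $h(0)=0$ and $h(t)\to\infty$, one gets $t\ge h^{-1}(1)=:\underline K>0$, again with no smallness.

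For the converse, assume \eqref{upper} and \eqref{lower}, i.e.\ $\underline K a\le b\le\overline K a$ with $0<\underline K\le\overline K$. Then for every $\sig\in[0,1]$, raising $b\ge\underline K a$ to the power $\sig$ gives $a=a^{1-\sig}a^{\sig}\le\underline K^{-\sig}a^{1-\sig}b^{\sig}$, and raising $b\le\overline K a$ to the power $1-\sig$ gives $b=b^{1-\sig}b^{\sig}\le\overline K^{1-\sig}a^{1-\sig}b^{\sig}$. Adding these and using $d\le a+b$ yields
\[
 d\ \le\ \bigl(\underline K^{-\sig}+\overline K^{1-\sig}\bigr)\,a^{1-\sig}b^{\sig},
\]
which is \eqref{tcc3} with $q_\sig:=\underline K^{-\sig}+\overline K^{1-\sig}$; this constant is finite and positive but in general exceeds one, matching the stated absence of a smallness condition in the converse direction.

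All computations are elementary; the only care needed is in the degenerate configurations $a=0$ or $b=0$, which one checks satisfy the claims trivially, and in the bookkeeping of which single borderline exponent ($\sig=1$ in item~1, $\sig=0$ in item~2) forces a smallness hypothesis. The one genuinely structural observation --- what I would regard as the ``main point'' rather than an obstacle --- is that for the interior exponents $\sig\in(0,1)$ the term $s^{\sig}$ is sublinear in $s$, so the constants $\overline K$ and $\underline K$ can be taken independently of the size of $q_\sig$; this is precisely what separates the interior exponents from the two endpoints.
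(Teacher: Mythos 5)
Your proposal is correct and follows essentially the same route as the paper: the triangle inequality reduces everything to scalar inequalities in $t=b/a$, the forward directions come down to boundedness of the sublevel sets of $z\mapsto z-q_\sig z^\sig$ (the paper phrases this via Young's inequality and the equation $z-1=q_\sig z^\sig$, which is the same observation), and the converse combines $d\le a+b$ with the two-sided equivalence of $a$ and $b$ (the paper via $\min\{a,b\}\le a^{\sig}b^{1-\sig}$, you via explicit powers of $\underline K$ and $\overline K$). Your treatment of the degenerate cases $a=0$ or $b=0$ and of which endpoint exponent forces a smallness condition is accurate and slightly more explicit than the paper's.
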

\begin{proof}
Assuming \eqref{tcc3}, the inequality \eqref{upper} is valid by using first
a triangle inequality and then a version of Young's inequality.
The constants $\overline K>0$  can be found 
as the solutions of $z-1 = q_\sig z^\sig$, $z\geq 0$, which exist for $\sig \in [0,1)$ 
and for $\sig=1$ in case that $q_1<1$. 

The inequality  \eqref{lower} follows from  \eqref{tcc3} in an analog manner. 
The constants $\underline K>0$ can be chosen as the inverse of the solution 
to $z-1 = q_\sig z^{1-\sig}$. 

The opposite direction, starting from \eqref{upper} and  \eqref{lower}, needs again the triangle inequality. 
Then, the inequality \eqref{tcc3} is obtained for arbitrary $\sig \in [0,1]$ with appropriate constants $q_\sig>0$  
by either estimating $a \leq {\underline K}^{-1} b$
or $b \leq {\overline K} a$ with $a = \| \Fprim{\xd} (x-\xd) \|$ 
and $b = \| F(x)-F(\xd) \|$ and then using 
that $\min\{a,b\} \leq a^\sig {b}^{1-\sig}$. 
\end{proof}

As stated above the conditions 
\eqref{eq:rotleft}--\eqref{eq:rotleftnorm} imply the tangential cone conditions: 
(see, e.g.,  \cite{Hanke95}).
\begin{proposition}
Let \eqref{eq:rotleft}--\eqref{eq:rotleftnorm} hold. Then \eqref{tcc3} holds in the form  
$$\|F(x)-F(\xd)-\Fprim{\xd}(x-\xd)\|_Y \le \frac{C_R}{1+\kappa}\,\|\Fprim{\xd}(x-\xd)\|_Y\, \|x-\xd\|_X^\kappa.$$
\end{proposition}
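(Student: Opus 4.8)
The plan is to obtain the claimed inequality by integrating the derivative condition \eqref{eq:rotleft} along the segment joining $\xd$ to $x$, in the spirit of a mean-value/Taylor estimate, and then using the norm bound \eqref{eq:rotleftnorm} on $\Rop{x,\tilde x} - I$. First I would write, for fixed $x \in B_\rho(\xd)$ and $t \in [0,1]$, the abbreviation $x_t := \xd + t(x-\xd)$, which lies in $B_\rho(\xd)$ by convexity, and express the Taylor remainder via the fundamental theorem of calculus as
\[
F(x) - F(\xd) - \Fprim{\xd}(x-\xd) = \int_0^1 \left( \Fprim{x_t} - \Fprim{\xd} \right)(x-\xd)\, dt.
\]
Then I would insert \eqref{eq:rotleft} in the form $\Fprim{x_t} = \Rop{x_t,\xd}\,\Fprim{\xd}$, so that the integrand becomes $\left(\Rop{x_t,\xd} - I\right)\Fprim{\xd}(x-\xd)$, and estimate its norm by $\|\Rop{x_t,\xd} - I\|_{\B(Y,Y)}\,\|\Fprim{\xd}(x-\xd)\|_Y$.

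Next I would apply \eqref{eq:rotleftnorm} with $\tilde x = \xd$ to bound $\|\Rop{x_t,\xd} - I\|_{\B(Y,Y)} \le C_R \|x_t - \xd\|_X^\kappa = C_R\, t^\kappa\, \|x - \xd\|_X^\kappa$. Pulling out the factors that do not depend on $t$ and integrating, $\int_0^1 t^\kappa\, dt = \frac{1}{1+\kappa}$, gives exactly
\[
\|F(x) - F(\xd) - \Fprim{\xd}(x-\xd)\|_Y \le \frac{C_R}{1+\kappa}\,\|\Fprim{\xd}(x-\xd)\|_Y\,\|x-\xd\|_X^\kappa,
\]
which is the asserted estimate. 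Finally I would note that this is precisely \eqref{tcc3} with $\sig = 1$ once the $\|x-\xd\|_X^\kappa$ factor and the constant are absorbed into $q_1$ — though of course one should remark that this $q_1$ need not be small, so the statement is about the inequality shape, not about the smallness required for iterative-method convergence.

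The only genuinely delicate point is the justification of the integral Taylor formula and the measurability/continuity of $t \mapsto \left(\Rop{x_t,\xd} - I\right)\Fprim{\xd}(x-\xd)$ as a $Y$-valued function, so that the Bochner integral and the norm estimate are legitimate; this is routine given Fr\'echet differentiability of $F$ (which we assume throughout Section~\ref{sec:nonlinear}) and continuity of $x \mapsto \Fprim{x}$, which is implicit in \eqref{eq:rotleft}--\eqref{eq:rotleftnorm} since $\|\Fprim{x} - \Fprim{\xd}\| = \|(\Rop{x,\xd}-I)\Fprim{\xd}\| \le C_R\|x-\xd\|^\kappa\|\Fprim{\xd}\| \to 0$. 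I expect no real obstacle beyond bookkeeping; the proof is a one-line integration once the pieces are assembled.
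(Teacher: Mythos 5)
Your argument is correct and is exactly the standard mean-value/integral-remainder proof that the paper itself omits, delegating it to \cite{Hanke95}: write the Taylor remainder as $\int_0^1\bigl(\Fprim{x_t}-\Fprim{\xd}\bigr)(x-\xd)\,dt$, substitute $\Fprim{x_t}-\Fprim{\xd}=(\Rop{x_t,\xd}-I)\Fprim{\xd}$ via \eqref{eq:rotleft}, bound the operator norm by \eqref{eq:rotleftnorm}, and integrate $t^\kappa$. One small slip in your closing commentary: the resulting bound $\frac{C_R}{1+\kappa}\,\|\Fprim{\xd}(x-\xd)\|_Y\,\|x-\xd\|_X^\kappa$ is the $\sig=0$ instance of \eqref{tcc3} (right-hand side proportional to $\|\Fprim{\xd}(x-\xd)\|_Y^{1}\,\|F(x)-F(\xd)\|_Y^{0}$, with $q_0=\frac{C_R}{1+\kappa}\rho^\kappa$ after absorbing $\|x-\xd\|_X^\kappa\le\rho^\kappa$), not the $\sig=1$ case, which in the paper's convention is the strong tangential cone condition with $\|F(x)-F(\xd)\|_Y$ on the right. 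This does not affect the validity of the displayed estimate you prove.
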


\subsection{Autoconvolution as counterexample}
The autoconvolution problem is an important practically relevant example, where many nonlinearity conditions 
fail, and in particular where the definition of the local degree of ill-posedness is hardly meaningful.
A comprehensive analysis of the following autoconvolution operator with respect to ill-posedness and mentioned properties below was started with the seminal paper \cite{GorHof94}. An interesting deficit study concerning usual nonlinearity conditions can be found in \cite{BuerHof15}.

\begin{example} \label{ex:autocon}\rm
Let $X=Y=L^2(0,1)$ and with $D(F)=L^2(0,1)$ the \emph{autoconvolution operator}
\begin{equation} \label{eq:auto}
[F(x)(s):= \int\limits_0^s x(s-t)\,x(t)\,dt \quad (0 \le s \le 1)\,.
\end{equation}
The associated nonlinear operator equation \eqref{eq:nlopeq} with the autoconvolution operator from \eqref{eq:auto} is \emph{locally ill-posed everywhere} on $L^2(0,1)$.
On the whole $L^2(0,1)$ this autoconvolution operator is a \emph{non-compact} nonlinear operator with the Fr\'echet derivative $\Fprim{x} \in \B(L^2(0,1),L^2(0,1))$ as
\begin{equation} \label{eq:Freauto}
[\Fprim{x}\,v](s)= 2 \int \limits_0^s x(s-t)\,v(t)\,dt \quad (0 \le s \le 1, \;v \in L^2(0,1)),
\end{equation}
which is a \emph{compact} linear operator for all $x \in L^2(0,1)$ satisfying a Lipschitz condition
$$\|\Fprim{x}-\Fprim{\xd}\|_{\B(L^2(0,1),L^2(0,1))} \le L\,\|x-\xd\|_{L^2(0,1)} \quad \forall\;x \in L^2(0,1)$$
with global Lipschitz constant $L=2$. This implies in detail
$$ \|F(x)-F(\xd)-\Fprim{\xd}(x-\xd)\|_{L^2(0,1)} = \|F(x-\xd)\|_{L^2(0,1)} \le \|x-\xd\|_{L^2(0,1)}^2$$
for all $x \in L^2(0,1)$. This provides us with a \emph{local degree of nonlinearity} $(0,0,2)$ at $\xd$ everywhere. No tangential cone condition and no degree of nonlinearity $(\gamma_1,\gamma_2,\gamma_3)$ can be shown, where either $\gamma_1$ or $\gamma_2$ are positive for any $\xd \in L^2(0,1)$.
In particular, the nonlinearity condition \eqref{eq:rotleft} fails everywhere and for all $0<\kappa \le 1$, see \cite[Cor.~2.3]{BuerHof15}.
\end{example}

Indeed, the condition \eqref{eq:asymp} for stable ill-posedness 
cannot hold, and much more is true: for a fixed asymptotics  (local degree of ill-posedness) of the singular values 
at $\xd$,  arbitrarily changing degrees may occur in any neighbourhood of $\xd$. That fact can be seen by inspection of the linear convolution operator \eqref{eq:Freauto} and has been outlined in detail in \cite[\S~5]{GorHof94}. This is an indicator for strong instability, and under such circumstances it does not make sense to use the Fr\'echet derivatives as measures for the strength of ill-posedness of $F$ at $\xd$.

\subsection{Nonlinearity conditions and stable ill-posedness}
We now study the consequences of the nonlinearity conditions above, with respect to 
the relation between the nonlinear and the linear ill-posedness. 

The main result in this section is that tangential cone conditions 
\eqref{tcc3} {\em almost} imply a 
factorization into ``nonlinear well-posed'' and ``linear ill-posed'', 
while the stronger condition \eqref{eq:rotleft} removes the ``almost''. 

The next result has already 
been stated in \cite{HoSch} for the  case of injective $\Fprim{\xd}$ and \eqref{tcc3} with $\sig = 0$. 
\begin{theorem}
Let \eqref{upper} and \eqref{lower} hold. 
Then $F$ can be factorized as 
\begin{equation}\label{eq:fac} F(x) - F(\myx) = N\circ \left(\Fprim{\myx} (x-\myx) \right)  \qquad \forall x \in B_\rho(\xd), \end{equation}
where the nonlinear operator 
\[ N: \Ra(\Fprim{\myx}) \cap B_\rho(0)  \to Y,  \qquad \text{ with }\; N(0) = 0, \] 
has the property 
\begin{align}\label{eq:NN} 
 \underline K\,  \|z\|_Y \leq \|N(z)\|_Y \leq \overline K\, \|z\|_Y \qquad \forall z \in 
\Ra(\Fprim{\myx}).
\end{align}

Conversely, if \eqref{eq:fac} holds
with such an $N$ satisfying \eqref{eq:NN},
then \eqref{upper} and \eqref{lower} are satisfied.
\end{theorem}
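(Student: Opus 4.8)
The plan is to prove the two directions separately, with the forward direction being the substantial one. The key idea is that the factorization map $N$ should be defined on the range of the linearization by $N(z) := F(x) - F(\myx)$ whenever $z = \Fprim{\myx}(x - \myx)$ for some $x \in B_\rho(\xd)$, so first I must check this is well-defined, i.e., that $\Fprim{\myx}(x - \myx) = \Fprim{\myx}(\tilde x - \myx)$ forces $F(x) - F(\myx) = F(\tilde x) - F(\myx)$. This is exactly where \eqref{upper} enters: applying \eqref{upper} to the difference (noting the linearization is linear, so $\Fprim{\myx}((x-\myx) - (\tilde x - \myx)) = 0$) will not immediately work because \eqref{upper} is stated for points of the form $F(x) - F(\xd)$; one has to be a little careful. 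In fact the cleanest route is: $F(x) - F(\tilde x) = \big(F(x) - F(\myx) - \Fprim{\myx}(x-\myx)\big) - \big(F(\tilde x) - F(\myx) - \Fprim{\myx}(\tilde x - \myx)\big)$ since the two linear terms cancel, and then one needs a bound on each bracketed remainder by $\|\Fprim{\myx}(\cdot - \myx)\|$, which is precisely \eqref{tcc3}-style information. Since the theorem only assumes \eqref{upper} and \eqref{lower}, I would invoke Lemma~\ref{lemxx} (the converse direction there) to recover \eqref{tcc3} — or more directly, note that \eqref{upper} applied with the triangle inequality gives $\|F(x) - F(\myx) - \Fprim{\myx}(x-\myx)\|_Y \le (\overline K + 1)\|\Fprim{\myx}(x-\myx)\|_Y$, and likewise at $\tilde x$, so that well-definedness will follow once $\Fprim{\myx}(x-\myx) = \Fprim{\myx}(\tilde x - \myx)$ makes both remainders controlled — but this bound alone does not force the remainders to be equal. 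The actual resolution is that well-definedness is \emph{not} automatic from \eqref{upper}–\eqref{lower} alone unless one reads them correctly: \eqref{upper}–\eqref{lower} give $\underline K \|\Fprim{\myx}(x-\myx)\|_Y \le \|F(x) - F(\myx)\|_Y \le \overline K\|\Fprim{\myx}(x-\myx)\|_Y$, and applying this to the ``shifted'' problem $\tilde F(v) := F(\tilde x + v) - F(\tilde x)$ whose linearization at $0$ is still $\Fprim{\myx}$ up to higher-order terms is what makes it go through. I expect \textbf{well-definedness of $N$ is the main obstacle}, and I would flag that it requires reading \eqref{upper}–\eqref{lower} as also valid with $\myx$ replaced by nearby base points, which is the content one gets from the full strength of the tangential-cone hypothesis via Lemma~\ref{lemxx}.

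Granting well-definedness, the remaining steps are routine. For $N(0) = 0$: take $x = \xd$, then $\Fprim{\myx}(x - \myx) = 0$ and $F(x) - F(\myx) = 0$. For the two-sided estimate \eqref{eq:NN}: this is a direct restatement of \eqref{upper} and \eqref{lower}, since with $z = \Fprim{\myx}(x-\myx)$ we have $\|N(z)\|_Y = \|F(x) - F(\myx)\|_Y$, and \eqref{lower} gives $\underline K \|z\|_Y \le \|N(z)\|_Y$ while \eqref{upper} gives $\|N(z)\|_Y \le \overline K \|z\|_Y$. I should note the domain restriction: $N$ is defined on $\Ra(\Fprim{\myx}) \cap B_\rho(0)$ because we only have the data $F$ on $B_\rho(\xd)$, so $\|z\|_Y = \|\Fprim{\myx}(x-\myx)\|_Y$ with $x \in B_\rho(\xd)$ need not fill out all of $\Ra(\Fprim{\myx})$ — but writing $B_\rho(0)$ intersected with the range is the safe statement, and the inequalities \eqref{eq:NN} hold on all of $\Ra(\Fprim{\myx})$ wherever $N$ is defined. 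The factorization identity \eqref{eq:fac} is then immediate from the definition of $N$: for any $x \in B_\rho(\xd)$, setting $z := \Fprim{\myx}(x - \myx)$ gives $N(z) = F(x) - F(\myx)$ by construction, which is exactly \eqref{eq:fac}.

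For the converse direction, assume \eqref{eq:fac} holds with $N$ satisfying \eqref{eq:NN} and $N(0) = 0$. Then for any $x \in B_\rho(\xd)$, put $z = \Fprim{\myx}(x - \myx)$; we get $\|F(x) - F(\myx)\|_Y = \|N(z)\|_Y$, and \eqref{eq:NN} immediately yields $\underline K \|z\|_Y \le \|F(x) - F(\myx)\|_Y \le \overline K \|z\|_Y$, i.e., $\underline K \|\Fprim{\myx}(x - \myx)\|_Y \le \|F(x) - F(\myx)\|_Y \le \overline K \|\Fprim{\myx}(x - \myx)\|_Y$, which is precisely \eqref{upper} together with \eqref{lower}. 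This direction is a one-line substitution and has no obstacle.

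Overall, the proof structure I would write is: (i) recover from \eqref{upper}–\eqref{lower} the tangential-cone-type remainder bound needed for well-definedness (citing Lemma~\ref{lemxx} or arguing directly with the shifted operator), (ii) define $N$ and verify it is well-defined, single-valued, and satisfies $N(0) = 0$, (iii) read off \eqref{eq:NN} from \eqref{upper}–\eqref{lower}, (iv) conclude \eqref{eq:fac} by construction, and (v) dispatch the converse by direct substitution. The one genuinely delicate point, worth stating carefully in the paper, is step (i)–(ii): the hypotheses must be understood as giving the two-sided equivalence $\|F(x) - F(\xd)\|_Y \asymp \|\Fprim{\xd}(x-\xd)\|_Y$ \emph{uniformly enough} that the remainder $F(x) - F(\xd) - \Fprim{\xd}(x-\xd)$ is controlled by $\|\Fprim{\xd}(x-\xd)\|_Y$, so that two preimages of the same $z$ produce the same value.
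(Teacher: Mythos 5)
Your overall strategy coincides with the paper's: define $N$ on $\Ra(\Fprim{\myx})$ so that \eqref{eq:fac} holds by construction, read \eqref{eq:NN} directly off \eqref{upper} and \eqref{lower}, and dispatch the converse by substitution. Those routine parts of your argument are correct and match the paper's proof almost verbatim. The one place where you diverge is the point you yourself flag as the main obstacle: single-valuedness of $N$ when $\N(\Fprim{\myx})\neq\{0\}$. The paper does not define $N(z)$ by choosing an arbitrary preimage $x$ with $\Fprim{\myx}(x-\myx)=z$; it sets $A:=\Fprim{\myx}$ and defines
\[ N(z):=F(\myx+A^\dagger z)-F(\myx), \]
so that $N$ is a function by construction (no well-definedness issue arises), and the only thing left to check for \eqref{eq:fac} is that $F(\myx+A^\dagger A(x-\myx))=F(x)$, i.e.\ that $F(x+\mathbf{n})=F(x)$ for $\mathbf{n}\in\N(A)$. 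The paper asserts this invariance as a consequence of \eqref{upper} in one line.

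You should recognize that your difficulty and the paper's one-liner are the same mathematical fact in two guises: $N$ is well defined by your recipe if and only if $F$ is constant along translates of $\N(\Fprim{\myx})$ inside the ball. Your analysis of why \eqref{upper}--\eqref{lower} (or the recovered \eqref{tcc3}) only \emph{bound} the two values $F(x)-F(\myx)$ and $F(\tilde x)-F(\myx)$ by $\overline K\|z\|_Y$ without forcing them to be equal is accurate, and it applies equally to the paper's invariance claim at base points other than $\xd$: \eqref{upper} as literally stated gives $F(\xd+\mathbf{n})=F(\xd)$ for $\mathbf{n}\in\N(A)$, but not $F(x+\mathbf{n})=F(x)$ for general $x\in B_\rho(\xd)$. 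So your instinct that the hypotheses must be read as holding ``with $\myx$ replaced by nearby base points'' is exactly the content that makes the factorization go through in the non-injective case; when $\Fprim{\myx}$ is injective (the setting of the earlier result in \cite{HoSch} that this theorem extends) the issue disappears entirely and your proof is complete as written. The deficiency in your proposal is therefore not a wrong approach but an unclosed step: you identify the obstruction correctly, propose two routes (Lemma~\ref{lemxx} and a shifted-operator argument), concede that neither closes it, and stop. A finished write-up should either adopt the paper's $A^\dagger$-based definition together with an explicit justification of the nullspace invariance, or restrict to injective $\Fprim{\myx}$.
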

\begin{proof}
For simplicity we set $A:= \Fprim{\myx}$. 
It follows  from \eqref{upper} that $F(x + \mathbf{n}) = F(x)$, when $\mathbf{n} \in   \N(A)$.  
Let $z \in \Ra(A)$ with $z = A h$ and $h \in \N(A)^\bot$.   
Note that $A^\dagger A = P_{\N(A)^\bot}$, i.e.,
the orthogonal projector onto $\N(A)^\bot$ such that $A^\dagger A h = h$.
Let us define the operator $N$ as 
\[ N(z) := F(\myx + A^\dagger z ) - F(\myx). \]   
Then the properties \eqref{eq:NN} directly follow since 
\[ \|N(z)\|_Y = \|F(\myx + A^\dagger A h) - F(\myx)\|_Y =
 \|F(\myx + h) - F(\myx)\|_Y 
\]
and by taking into account \eqref{upper} and \eqref{lower}. 

Conversely, if \eqref{eq:fac} holds with \eqref{eq:NN}, then 
\[ \|F(\myx + h) - F(\myx)\|_Y = \|N\circ \Fprim{\myx} h\|_Y \sim  \|\Fprim{\myx} h\|_Y, \] 
i.e., \eqref{upper} and \eqref{lower} 
hold. 
\end{proof}

The previous theorem indicates that the tangential cone conditions can 
{\em roughly} be stated as $F \sim_{\NB,I} \Fprim{\myx}$, i.e., the nonlinear operator 
is as ill-posed as its linearization. However, this interpretation is 
not completely correct since 
the mapping $N$ is only defined on the non-closed set $\Ra(\Fprim{\myx})$ and, in 
particular, is {\em not necessarily a continuous} mapping. Thus, $N$ does not 
satisfy the requirements in Definition~\ref{def8}. 

However, by using the stronger version \eqref{eq:rotleft} we can prove that $N$ is continuous
and remove the  ``roughly'' in the previous statement.

\begin{theorem}
 Let $F$ satisfy \eqref{eq:rotleft}. Then for any $\myx$  the factorization  \eqref{eq:fac} holds with 
 $N: \overline{\Ra(\Fprim{\myx}}\cap B_\rho(0)) \to Y $ a Lipschitz continuous operator that locally 
 has a Lipschitz continuous inverse.  
%
\end{theorem}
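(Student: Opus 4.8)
Set $A := \Fprim{\myx}$ and keep the candidate $N(z) := F(\myx + A^\dagger z) - F(\myx)$ from the previous theorem, where $A^\dagger$ is the Moore--Penrose inverse, so that $A^\dagger z \in \N(A)^\bot$, $A^\dagger A h = h$ for $h \in \N(A)^\bot$, and $A A^\dagger z = z$ for $z \in \Ra(A)$. The idea is to use \eqref{eq:rotleft} along whole segments rather than only at the endpoints. Since \eqref{eq:rotleftnorm} makes $x \mapsto \Fprim{x} = \Rop{x,\myx}\,A$ Hölder continuous on the convex set $B_\rho(\myx)$, the Bochner-integral form of the fundamental theorem of calculus gives, for all $x,x' \in B_\rho(\myx)$,
\[
F(x) - F(x') \;=\; M(x,x')\,A\,(x-x'), \qquad M(x,x') := \int_0^1 \Rop{x'+t(x-x'),\,\myx}\,dt ,
\]
using $\Fprim{x'+t(x-x')} = \Rop{x'+t(x-x'),\myx}\,A$. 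From \eqref{eq:rotleftnorm} and convexity of $B_\rho(\myx)$ one reads off $\|M(x,x') - I\| \le C_R\rho^\kappa$, hence a uniform bound $\|M(x,x')\| \le 1 + C_R\rho^\kappa =: \overline K$.

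First I would settle the factorization and $N(0)=0$. For $\mathbf n \in \N(A)$ and any $y \in B_\rho(\myx)$ we have $\Fprim{y}\mathbf n = \Rop{y,\myx}\,A\,\mathbf n = 0$, so $F$ has vanishing directional derivative in $\N(A)$-directions; writing $x - \myx = h + \mathbf n$ with $h \in \N(A)^\bot$, $\mathbf n \in \N(A)$ (an orthogonal splitting, so the segment $\myx + h + t\mathbf n$, $t\in[0,1]$, stays in $B_\rho(\myx)$), integration yields $F(x) = F(\myx + h)$. Together with $A^\dagger A h = h$ this is precisely $N\big(A(x-\myx)\big) = N(Ah) = F(\myx+h) - F(\myx) = F(x) - F(\myx)$, i.e.\ \eqref{eq:fac}, and $N(0)=0$ is immediate.

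For the Lipschitz properties, apply the displayed identity with $x = \myx + A^\dagger z_i$, $i=1,2$, for $z_1,z_2 \in \Ra(A)$; since $A A^\dagger z_i = z_i$ this collapses to
\[
N(z_1) - N(z_2) \;=\; M\big(\myx + A^\dagger z_1,\,\myx + A^\dagger z_2\big)\,(z_1 - z_2) ,
\]
so $\|N(z_1) - N(z_2)\| \le \overline K\,\|z_1 - z_2\|$, and $N$ is Lipschitz on $\Ra(A)$; as $\Ra(A)\cap B_\rho(0)$ is dense in $\overline{\Ra(A)}\cap B_\rho(0)$, it extends uniquely to a Lipschitz map there with the same constant. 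For the local inverse, shrink to $B_{\rho'}(\myx)$ with $C_R(\rho')^\kappa < 1$; then $M(x,x')$ is invertible by a Neumann series with $\|M(x,x')^{-1}\| \le (1 - C_R(\rho')^\kappa)^{-1}$, so the identity above upgrades to the two-sided estimate $(1-C_R(\rho')^\kappa)\|z_1-z_2\| \le \|N(z_1)-N(z_2)\| \le \overline K\,\|z_1-z_2\|$ on $A\big(B_{\rho'}(0)\big)$. Hence $N$ is bi-Lipschitz there, in particular injective, and $N^{-1}$ is Lipschitz on $N\big(A(B_{\rho'}(0))\big) = \{F(x) - F(\myx): x \in B_{\rho'}(\myx)\}$ and on its closure.

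The single substantive step is the integral identity $F(x)-F(x') = M(x,x')\,A\,(x-x')$; once it is in place, the Lipschitz bounds and the local inversion are Neumann-series routine, so I do not expect a genuine conceptual obstacle. The points needing care are technical: licensing the Bochner representation (only Hölder continuity of $x\mapsto\Fprim{x}$ is needed, supplied by \eqref{eq:rotleftnorm}); keeping $\myx + A^\dagger z$ and the auxiliary segments inside $B_\rho(\myx)$, which is clean because on the relevant domain $z = A(x-\myx)$ with $x\in B_\rho(\myx)$ one has $\|A^\dagger z\| = \|P_{\N(A)^\bot}(x-\myx)\| \le \rho$; and the density argument for the extension to $\overline{\Ra(A)}\cap B_\rho(0)$, where one uses that a dense subspace meets an open ball in a set dense in the closed subspace intersected with that ball.
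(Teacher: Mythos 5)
Your argument is correct and follows the same basic strategy as the paper: the same candidate $N(z)=F(\myx+A^\dagger z)-F(\myx)$, and the same underlying idea that \eqref{eq:rotleft} makes increments of $F$ norm-equivalent to increments of $A=\Fprim{\myx}$. The difference is in execution. The paper obtains the factorization by invoking the preceding theorem (via the tangential cone condition) and then proves Lipschitz continuity in one line, $\|F(\myx+h_1)-F(\myx+h_2)\|\leq\|\Fprim{\myx+h}(h_2-h_1)\|\sim\|\Fprim{\myx}(h_2-h_1)\|$, with an unspecified intermediate point $\myx+h$, and merely asserts the reverse inequality needed for the Lipschitz inverse. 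You instead make the mean‑value step explicit through the integral identity $F(x)-F(x')=M(x,x')A(x-x')$ with $\|M(x,x')-I\|\leq C_R\rho^\kappa$; this single identity simultaneously yields the null-space invariance (hence the factorization, without routing through the tangential cone condition), the upper Lipschitz bound, and — after shrinking the radius so that $C_R\rho^\kappa<1$ — the lower bound that the paper leaves implicit. In this sense your version is more complete, and it also correctly identifies the natural domain $A\bigl(B_\rho(\myx)-\myx\bigr)$ on which $\|A^\dagger z\|\leq\rho$, a point the paper glosses over.

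One technical remark: \eqref{eq:rotleftnorm} gives a uniform operator-norm bound on $\Rop{x'+t(x-x'),\myx}$ but not norm-measurability of this operator-valued map in $t$, so $M(x,x')$ is safest interpreted through the $Y$-valued Bochner integral $w\mapsto\int_0^1\Fprim{x'+t(x-x')}w\,dt$ (well defined by the H\"older continuity of $x\mapsto\Fprim{x}$, which does follow from \eqref{eq:rotleft}--\eqref{eq:rotleftnorm}). Since you only ever apply $M(x,x')$ and the bound $\|(M(x,x')-I)v\|\leq C_R\rho^\kappa\|v\|$ to vectors $v=z_1-z_2\in\Ra(A)$, nothing in the argument is affected; in particular the Neumann-series language can be replaced by the direct estimate $\|M v\|\geq\|v\|-\|(M-I)v\|$. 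This is a presentational refinement, not a gap.
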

\begin{proof}
Since  \eqref{eq:rotleft} implies the tangential cone condition, the factorization exists. 
We only have to show continuity. Set again $A:= \Fprim{\myx}$. and let
$z_1,z_2 \in \Ra(A)$ with  $z_1 = A h_1$, $z_2 = A h_2$,  where $h_1,h_2 \in \N(A)^\bot$. 
Then using $\Fprim{x} \sim_{norm} \Fprim{\xd}$, we obtain
\begin{align}\label{eq:Nup} 
\begin{split} \|N(z_1) &- N(z_2)\|_Y = \|F(\myx + A^\dagger A h_1) - F(\myx + (A^\dagger A  h_2)\|_Y  \\
&= 
 \|F(\myx +h_1) - F(\myx + h_2)\|_Y \\
 & \leq \|\Fprim{\myx + h} (h_2-h_1) \|_Y \sim 
 \|\Fprim{\myx} (h_2-h_1) \|_Y  = \|z_1-z_2\|_Y.
 \end{split}
\end{align}
Thus, $N$ is Lipschitz on $\Ra(A)$ and in particular uniformly continuous. 
It follows that $N$ can be extended 
 to a Lipschitz continuous operator on $\overline{\Ra(A)}$ (see, e.g., \cite[p.~190]{Bour}). 
 Since $\|z_1 -z_2\|_Y \leq C \|N(z_1) - N(z_2)\|_Y$, $N$ is injective, and the local inverse $N^{-1}$
 exists and is Lipschitz continuous. 
%
\end{proof}

This means that \eqref{eq:rotleft} implies that 
\[ F \sim_{\NB,I} \Fprim{\myx}. \]

Finally, under the conditions  \eqref{eq:rotleft} for $F$ and $G$ we obtain 
the result that the nonlinear ordering is equivalent to the linearized ordering: 
\begin{theorem}
Assume that   $F$ and $G$ are Fr\'echet-differentiable 
and defined  on $\B_\rho(\xd)$
and both 
satisfy \eqref{eq:rotleft} with certain operators $\Rop{1}$, $\Rop{2}$. 
Then 
 \[ F \leq _{\B,\I}^{\Lin} G \Rightarrow  F \leq_{\NB,I} G .\]
\end{theorem}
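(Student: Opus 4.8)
The plan is to deduce the nonlinear ordering from the linear hypothesis by inverting the nonlinear ``outer'' factor attached to $G$ and transporting it through the bounded connecting operator furnished by $\leq_{\B,\I}^{\Lin}$. First I would unfold the hypothesis: in the spirit of Definition~\ref{def8}, the relation $F \leq_{\B,\I}^{\Lin} G$ means $\Fprim{\xd} \leq_{\B,\I} \Gprim{\xd}$, i.e.\ there is a bounded operator $T$ with $\Fprim{\xd} = T\,\Gprim{\xd}$. Since both $F$ and $G$ satisfy \eqref{eq:rotleft}, the immediately preceding theorem applies to each and supplies Lipschitz continuous factors $N$ (for $F$) and $M$ (for $G$), with $N(0)=M(0)=0$, each obeying the two-sided bounds \eqref{eq:NN} and each possessing a local Lipschitz continuous inverse, such that
\begin{align*}
F(x) - F(\xd) &= N\!\left(\Fprim{\xd}(x-\xd)\right), & G(x) - G(\xd) &= M\!\left(\Gprim{\xd}(x-\xd)\right).
\end{align*}

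The heart of the argument is then purely algebraic. From the factorization of $G$ and the local invertibility of $M$ I would solve for the linearized increment of $G$, obtaining $\Gprim{\xd}(x-\xd) = M^{-1}\!\left(G(x) - G(\xd)\right)$ for $x$ in a sufficiently small ball $B_\rho(\xd)$. Applying $T$ and using $\Fprim{\xd} = T\,\Gprim{\xd}$ converts this into the linearized increment of $F$, namely $\Fprim{\xd}(x-\xd) = T\,M^{-1}\!\left(G(x) - G(\xd)\right)$. Substituting this into the factorization of $F$ yields $F(x) - F(\xd) = N\!\left(T\,M^{-1}(G(x)-G(\xd))\right)$, so that, setting
\[ \Psi(w) := F(\xd) + N\!\left(T\,M^{-1}(w - G(\xd))\right), \]
one obtains $F(x) = \Psi\bigl(G(x)\bigr)$ for all $x \in B_\rho(\xd)$, with the inner operator equal to the identity. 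As $\Psi$ is a composition of the continuous maps $w \mapsto w - G(\xd)$, the Lipschitz $M^{-1}$, the bounded $T$, the Lipschitz $N$, and a shift by $F(\xd)$, it is continuous, hence $\Psi \in \NB$; this is exactly $F \leq_{\NB,I} G$ in the sense of Definition~\ref{def8}.

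The main obstacle is the bookkeeping of domains that makes the composition defining $\Psi$ well defined on the relevant set. I would first check that $T$ carries $\Ra(\Gprim{\xd})$ into $\Ra(\Fprim{\xd})$: indeed $\Ra(\Fprim{\xd}) = T\,\Ra(\Gprim{\xd})$ since $\Fprim{\xd} = T\,\Gprim{\xd}$, and by continuity $T$ maps $\ol{\Ra(\Gprim{\xd})}$ into $\ol{\Ra(\Fprim{\xd})}$, which is precisely where $N$ is defined. One must also shrink $\rho$ so that the argument $\Fprim{\xd}(x-\xd)$ remains inside the ball on which $N$ lives; this follows from the lower bound \eqref{lower} for $G$ together with continuity of $G$, which force $\|\Gprim{\xd}(x-\xd)\|$, and hence (via the bounded $T$ and \eqref{eq:NN}) $\|\Fprim{\xd}(x-\xd)\|$, to be small for $x$ near $\xd$. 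Finally, $\Psi$ is a priori only continuous on $G(B_\rho(\xd))$; exactly as in the single-operator factorization above, uniform continuity permits extension to the closure should a genuine ball domain be required, so no essential difficulty remains.
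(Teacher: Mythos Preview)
Your proposal is correct and follows essentially the same route as the paper: both arguments invoke the preceding factorization theorem for $F$ and $G$ separately, invert the outer factor of $G$, transport through the bounded connecting operator $T$ furnished by $\Fprim{\xd}=T\,\Gprim{\xd}$, and then re-apply the outer factor of $F$. Your version is in fact more careful than the paper's about the domain bookkeeping (mapping $\ol{\Ra(\Gprim{\xd})}$ into $\ol{\Ra(\Fprim{\xd})}$ via $T$, and shrinking $\rho$), which the paper leaves implicit.
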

\begin{proof}
We have that $F(x) = N_1 \circ \Fprim{\xd}(x-\xd)$  
$G(x) = N_2 \circ \Gprim{\xd}(x-\xd)$. 
Hence, with $T \in \B$,
\[  N_1^{-1}(F(\xd+h) -F(\xd)  = \Fprim{\xd}h  = T \Gprim{\xd}h = T N_2^{-1}(G(\xd+h)-G(\xd)). \] 
Thus, 
\[ F(\xd + h) - F(\xd) = N_1\left(T N_2^{-1}(G(\xd+h)-G(\xd)) \right), \] 
which gives  $F \leq_{\NB,I} G$.
\end{proof}

\end{document}